\definecolor{gris3}{rgb}{0.3,0.3,0.3}
\definecolor{Green}{rgb}{0,.6,0}
\definecolor{Blue}{rgb}{0,0,1}
\definecolor{Red}{rgb}{1,0,0}
\definecolor{Gray}{rgb}{0.2,0.2,0.2}
\definecolor{Maroon}{rgb}{0.6,0.05,0.03}
\newcommand{\blc}{\color{black}}
\newcommand{\blu}{\color{black}}
\newcommand{\blr}{\color{black}}
\tikzstyle{c} = [rectangle, rounded corners, draw]
\tikzset{box/.style={draw,rectangle,rounded  corners=0pt, fill=gray!15, minimum  width=3cm, minimum  height=2cm}}
\tikzset{loz/.style={draw,diamond, aspect=1.5, fill=gray!15}}
\tikzstyle{rnd}=[circle,fill=blue!25,minimum  width=5em]
\newtheorem{theorem}{Theorem}  
\newtheorem{proposition}{Proposition}
\newtheorem{corollary}{Corollary}
\newtheorem{lemma}{Lemma}
\newtheorem{remark}{Remark}
\newtheorem{assumption}{Assumption}
\newtheorem{definition}{Definition}
\newcommand{\Z}{\mathbb{Z}}
\newcommand{\R}{\mathbb{R}}
\newcommand{\Esp}{\mathbb{E}}
\newcommand{\rn}{\mathbb{R}^n}
\newcommand{\rnn}{\mathbb{R}^{n\times n}}
\newcommand{\N}{\mathbb{N}}
\newcommand{\lun}{\mathbb{L}^1(\Omega,\mathcal{G},\mathbb{P})}
\newcommand{\lp}{{\mathbb{L}}^p(\Omega,\mathcal{G},\mathbb{P})}
\newcommand{\ldeux}{{\mathbb{L}}^2(\Omega,\mathcal{G},\mathbb{P})}
\newcommand{\M}{\mathcal{M}}
\newcommand{\dpl}{\delta^k_p}
\newcommand{\Dp}{\Delta^k_p}
\newcommand{\Dm}{\Delta^k_m}
\newcommand{\dm}{\delta^k_m}
\newcommand{\fd}{f}
\newcommand{\fe}{f_\Theta}
\newcommand{\Mk}{\mathcal{M}^k}
\newcommand{\fok}{f^k_0}
\newcommand{\fsk}{f^k_s}
\newcommand{\dmax}{\tau^{-\hat{z}}}
\newcommand{\Fok}{F^k_0}
\newcommand{\Fsk}{F^k_s}
\newcommand{\ef}{\varepsilon_f}
\newcommand{\isucc}{\mathds{1}_{S}}   
\newcommand{\iunsuc}{\mathds{1}_{{\bar{S}}^C}}
\newcommand{\iunc}{\mathds{1}_{{\bar{S}}^{\bar{C}}}}
\newcommand{\ijk}{\mathds{1}_{J_k}}
\newcommand{\iji}{\mathds{1}_{J_i}}
\newcommand{\ijkc}{\mathds{1}_{\bar{J_k}}}
\newcommand{\isbar}{\mathds{1}_{\bar{S}}} 
\newcommand{\abs}[1]{\left\lvert#1\right\rvert}
\newcommand{\E}[1]{\mathbb{E}\left(#1\right)}
\newcommand{\pr}[1]{\mathbb{P}\left(#1\right)}
\newcommand{\norme}[1]{\left\lVert#1\right\rVert}
\newcommand{\normp}[1]{{\left\lVert#1\right\rVert}_{p}}
\newcommand{\norminf}[1]{{\left\lVert#1\right\rVert}_{\infty}}
\title{
    {\blc StoMADS: S}tochastic {\blc b}lackbox {\blc o}ptimization using {\blc p}robabilistic {\blc e}stimates
}
\author{
	\href{mailto:charles.audet@gerad.ca}{Charles Audet}\thanks{GERAD and DÈpartement de MathÈmatiques et de GÈnie Industriel, \'Ecole Polytechnique de MontrÈal, C.P. 6079, Succ. Centre-ville, MontrÈal, QuÈbec H3C 3A7, Canada (\href{mailto:charles.audet@gerad.ca}{Charles.Audet@gerad.ca}, 
		\href{https://www.gerad.ca/Charles.Audet}{www.gerad.ca/Charles.Audet}, \href{mailto:kwassi-joseph.dzahini@polymtl.ca}{Kwassi-Joseph.Dzahini@polymtl.ca}, \href{https://www.gerad.ca/fr/people/kwassi-joseph-dzahini}{www.gerad.ca/fr/people/kwassi-joseph-dzahini}, 	\href{mailto:sebastien.le.digabel@gerad.ca}{Sebastien.Le.Digabel@gerad.ca}, \href{https://www.gerad.ca/Sebastien.Le.Digabel}{www.gerad.ca/Sebastien.Le.Digabel}).
	}
	\and
	\href{mailto:kwassi-joseph.dzahini@polymtl.ca}{Kwassi Joseph Dzahini}\footnotemark[1]	
	\and
	\href{mailto:michael.kokkolaras@mcgill.ca}{Michael Kokkolaras}\thanks{GERAD and McGill University, Mechanical Engineering Department, 845 Rue Sherbrooke Ouest, MontrÈal, QuÈbec H3A 0G4, Canada (\href{mailto:michael.kokkolaras@mcgill.ca}{Michael.Kokkolaras@mcgill.ca}, 
		\href{https://www.mcgill.ca/mecheng/people/staff/michael-kokkolaras}{www.mcgill.ca/mecheng/people/staff/michael-kokkolaras}).
	}
	\and
	\href{mailto:Sebastien.Le.Digabel@gerad.ca}{S\'ebastien Le Digabel}\footnotemark[1]
}
\begin{document}
\maketitle

\vspace*{-0.5cm}

\noindent
{\bf Abstract:}
This work introduces {\blc StoMADS}, a stochastic variant of {\blc the {\blc m}esh {\blc a}daptive {\blc d}irect-{\blc s}earch (MADS) algorithm {\blu originally developed} for deterministic blackbox optimization. StoMADS} considers the {\blc unconstrained} optimization of an objective function $f$ whose values can be computed {\blu only} through {\blu a blackbox corrupted by} some random noise {\blc following} an unknown distribution. The proposed {\blc method is based on} an algorithmic {\blc framework} similar to that of MADS and {\blc uses} random estimates of function values obtained from stochastic observations since the exact deterministic computable version of $f$ is not available. {\blc Such estimates are required to be accurate with a sufficiently large but fixed probability and satisfy a variance condition. The ability of the proposed algorithm to generate an asymptotically dense set of search directions is then exploited to show convergence to a Clarke stationary point {\blc of $f$} with probability~one, using martingale theory.}

\noindent
{\bf \blc Key words:} Blackbox optimization, Derivative-free optimization, Stochastic optimization, Mesh adaptive direct-search, Probabilistic estimates.


%

\clearpage
\clearpage
\newpage
\section{Introduction} 

Blackbox optimization (BBO), {\blr an} instance of derivative-free optimization {\blc (DFO)}, is the study of design and analysis of algorithms that assume {\blu that} the objective and/or constraint functions are {\blc provided} by {\blu blackboxes, i.e, ``{\it any processes whose inner workings are analytically unavailable and which return an output, when provided an input}''}~\cite{AuHa2017}. 

This work introduces a stochastic variant of the {\blc m}esh {\blc a}daptive {\blc d}irect {\blc s}earch (MADS) algorithm~\cite{AuDe2006} {\blc for deterministic BBO} and analyze{\blc s} it using {\blc elements from}~\cite{AuDe2006,BaScVi2014,chen2018stochastic,paquette2018stochastic}. {\blc It aims to solve} the following stochastic {\blc blackbox} optimization problem:

\begin{equation}\label{problem1}
{\blr \underset{x\in \R^n}{\min}\   f(x) \quad \text{where}\quad f(x)=\Esp_\Theta\left[f_\Theta(x)\right]}
\end{equation}
$\Theta$ is a random variable obeying some unknown distribution, {\blc $\Esp_\Theta$ denotes the expectation with respect to $\Theta$}, {\blc $f_\Theta$} denotes the {\blc blackbox, the} noisy computable version of the numerically unavailable {\blc objective} function {\blc $f\colon\rn\mapsto \R$.} 
{\blc In the convergence analysis {\blc of Section~\ref{convAnalysis}}, the objective function} is assumed to be {\blc locally Lipschitz} continuous and bounded from below.  

{\blc Such problems are of utmost importance and often arise in modern statistical machine learning, where the random variable $\Theta$ represents a data point drawn according to some unknown distribution and $f_{\Theta}(x)$ measures the fit of some model parameter $x$ to the data point $\Theta$} ~\cite{balasubramanian2019zeroth,curtis2017stochastic,kulunchakov2019estimate}.

{\blu The study of these problems and specifically, developing provable algorithms to solve~\eqref{problem1}, has been a topic of intense research. In the recent years, several methods have been developed, most of which are extensions of existing traditional deterministic DFO methods~\cite{AuHa2017,CoScVibook} to stochastic functions}~\cite{augustin2017trust,Ch2012,chen2018stochastic,LaBi2016,paquette2018stochastic,shashaani2018astro}. Such methods {\blc are} classified  according to Ang¸n and Kleijnen~\cite{angun2012asymptotic} into two categories~\cite{Ch2012}: {\blr White-box} and blackbox methods. {\blc White-box methods are} those where one has the ability to {\blc carry out an estimation of} the gradient {\blc of $f$} {\blc by means of a single simulation}{\blc . P}erturbation analysis~\cite{Ch2012} and {\blc the} likelihood ratio function method~\cite{fu2006gradient} {\blc being some examples among many others}. {\blc Blackbox methods are those who essentially process the simulation model as a blackbox}, {\blc such as the} 
{\blc s}tochastic {\blc a}pproximation method~\cite{kiefer1952stochastic}, {\blc r}esponse {\blc s}urface {\blc m}ethodology~\cite{amaran2014simulation}{\blc , and} many heuristics~\cite{amaran2014simulation}. Thorough descriptions of {\blc stochastic approximation and response surface methodology} are provided in~\cite{AnFe01a}. 

However, in many real applications, {\blu the} simulation model {\blu is inaccessible}~\cite{Ch2012} or the estimation of the gradient can be computationally expensive{\blr .} Direct-search blackbox optimization methods, generally known to be robust and reliable in practice~\cite{Audet2014a}, appear to be the most {\blu promising} option. It is {\blu important to emphasize} that the analysis in {\blc the present} work {\blc does not assume the} {\blc existence} of derivatives, i.e, first-order information{\blu ,} and consequently no gradient approximations will be carried out. 

Examples of existing traditional deterministic direct-search blackbox optimization method that have been extended to stochastic functions include the Nelder-Mead (NM) method~\cite{NeMe65a}. After Barton and Ivey~\cite{barton1996nelder}{\blu ,} who are among the first authors to propose a variant of the NM algorithm designed to cope with noisy function evaluations, Anderson and Ferris~\cite{AnFe01a} also considered {\blc the} unconstrained optimization of functions with evaluations subject to a random noise. They {\blu used} an algorithmic framework similar to that of NM, {\blc making use of so-called} {\it structures} instead of {\blu simplices} and {\blc propose} an algorithm involving reflection, expansion and contraction steps, which is shown to converge to a point with probability one, based on Markov {\blc c}hains theory~\cite{durrett2010probability}. Chang~\cite{Ch2012} proposed a new variant of the classic NM method, the {\blc s}tochastic Nelder-Mead method. After replacing the shrink step of the classic NM by {\blc the {\it {\blc a}daptive {\blc r}andom {\blc s}earch}, which is a local and global search framework}, in order to {\blc avoid a precocious convergence of the new algorithm}, he {\blu proved} convergence of {\blc the stochastic Nelder-Mead method} to global optima with probability one. 

Audet et al.~\cite{AudIhaLedTrib2016} recently proposed Robust-MADS, a kernel smoothing-based variant of the MADS~\cite{AuDe2006} algorithm designed to approach the minimizer of an objective function when only having access to noisy function values. At each iteration {\blc of Robust-MADS}, {\blc the incumbent solution} is determined based on values of the  smoothed version of the noisy available objective constructed {\blc from a list of trial points.} This list is then {\blc eventually updated} with the best iterate found before the next iteration of the algorithm. The proposed method is shown to have zero-order~\cite{AuDeLe07} convergence properties: Iterates produced by Robust-MADS converge to a point which is ``{\blc \em the limit of mesh local optimizers on meshes that get infinitely fine}''{\blc~\cite{AuDe2006}}. Note however that even though this method {\blc produces} interesting results when applied to problems {\blc including those involving granular and discrete variables}~\cite{AuLeDTr2018}, the corresponding work present{\blc s} no {\blc computational} tests to show how the proposed algorithm behaves on problems involving random noise, {\blc i.e, in a stochastic framework}. Furthermore, Robust-MADS results in a deterministic algorithm in the sense that it uses only deterministic algorithmic objects, i.e, mesh and frame size parameters, smoothed function values, {\blc etc.} to ensure improvements, in such a way that the resulting convergence of algorithm iterates should be understood  from a deterministic and non-stochastic angle. 

Moreover, note that unlike the present research where the noise distribution is {\blu assumed} to be unknown,~\cite{PierreYvesBouchet} considers the optimization of functions that are numerically unavailable and whose values can only be computed through a blackbox corrupted {\blu by} Gaussian random noise. Using an algorithmic framework similar to that of MADS, the algorithm proposed in~\cite{PierreYvesBouchet} aims to minimize such unknown functions by adaptively driving to zero the standard deviation of the estimators of the unavailable function values, making use of statistical inference techniques. However, even though this algorithm is shown to have desirable convergence properties, it needs to be improved since obtaining satisfactory solutions in practice requires a lot of blackbox evaluations, thus making the method {\blu computationally expensive}.

This study proposes StoMADS, a stochastic variant {\blc of MADS}, designed to cope with the {\blc unconstrained} optimization of stochastic blackbox functions {\blu while guaranteeing convergence to a Clarke stationary point  provided that certain conditions are satisfied}. The proposed work uses an algorithmic framework similar to that of MADS in addition to assumptions including those taken from~\cite{chen2018stochastic,paquette2018stochastic}. More precisely, it has been assumed that function estimates that are used to ensure improvements in the algorithm need to be {\blc accurate enough} with a fixed probability which does not have to equal one but simply needs to be above a certain constant~\cite{chen2018stochastic,paquette2018stochastic}{\blr .} {\blr I}n addition to the fact that such estimates are further assumed to satisfy a variance condition~\cite{paquette2018stochastic} that will be specified later, {\blu no assumption is made} about their distribution {\blc nor about the way they are generated}. 

The main novelty of {\blr the present} work is that no model or gradient information is needed to find descent directions, compared to prior works, in particular~\cite{chen2018stochastic,paquette2018stochastic} and~\cite{wang2019stochastic}. {\blu This} work uses direct-search techniques and then exploits the ability of the proposed algorithm to generate an asymptotically dense set of search directions to guarantee convergence. To the best of our knowledge, this {\blc research} {\blc is the first} to propose a stochastic variant of MADS with full-supported convergence results, obtained {\blc using} martingale theory.

This {\blc manuscript} is organized as follows. Section~\ref{sec2} introduces the general framework of the proposed stochastic method and discusses the requirements on random estimates to guarantee convergence in addition to how such estimates can be obtained in practice. It is followed by Section~\ref{convAnalysis} which presents the main convergence results. Computational results {\blc are reported} in {\blc Section~\ref{sec4}}, followed by {\blc a} discussion and {\blu suggestions for} future work.
\section{The StoMADS algorithm and probabilistic estimates}\label{sec2}

{\blu This section presents the general framework of StoMADS and introduces random quantities such as probabilistic estimates that {\blr are} useful for the convergence analysis. It then shows how such estimates can be constructed.}
\subsection{{\blc The} StoMADS algorithm}\label{stoalg}
{\blc Similarly to} MADS~\cite{AuDe2006}, StoMADS is an iterative algorithm where each iteration is characterized by two main steps: an optional SEARCH step which consists of a global {\blc exploration} that {\blc may} use various strategies including the use of surrogate functions and heuristics, to explore the variables space, and a local POLL step which follows stricter rules and performs a local {\blc exploration} in {\blc a subset of} the space of variables, {\blc called the {\it frame}}. During each of these two steps, a finite number of trial points are generated on a discretization of the space of variables called the {\it mesh}. {\blc The discretization of the mesh and frame is controlled by the mesh and frame size parameters, $\dm$ and $\dpl$, respectively, thus disparting from the notation $\delta^k$ and $\Delta^k$ from~\cite{AuHa2017} because $\Dm$ and $\Dp$ will be used to denote random variables.}

{\blc Let $\mathbf{D}\in\R^{n\times p}$ be a matrix, whose columns denoted by the set $\mathbb{D}$ form a positive spanning set. The mesh $\M^k$ and the frame $\mathcal{F}^k$ are respectively
\[\mathcal{M}^k:= \{x^k+\dm d: d=\mathbf{D}y,\ y\in \Z^p\}\quad\text{and}\quad \mathcal{F}^k := \{x\in \mathcal{M}^k, \norminf{x-x^k}\leq \dpl b \}, \]
where $b=\max\{\norminf{d'}, d'\in \mathbb{D}\}$.}

At iteration $k$, given an incumbent solution $x^k\in\Mk$, the {\blc StoMADS} algorithm seeks to find a trial {\blc {\it ``improved mesh point''}~\cite{AuDe2006}}  $y=x^k+\dm d$ whose objective function value is {\blc less} than the current unknown incumbent value $\fd(x^k)$, i.e $\fd(y)<\fd(x^k)$. {\blc In the present work, $\fok$ and $\fsk$ denote respectively the estimates of $\fd(x^k)$ and $\fd(x^k+s^k)$, where $s^k=\dm d$}{\blu ,} constructed using {\blc evaluations} of the available noisy blackbox $f_\Theta$. Such estimates are then compared {\blc in a way specified} below, to determine whether a trial point $x^k+s^k$ may be an improved mesh point or not. 

In both the SEARCH and POLL steps, unlike {\blc the} MADS algorithm where function values $\fd(x^k)$ and $\fd(x^k+s^k)$ are available, informations provided by {\blc the} estimates $\fok$ and $\fsk$ are used to determine whether a trial point $x^k+s^k$ may be an improved mesh point or not, i.e.{\blu ,} whether an iteration is successful or not. Thus, such estimates {\blc need to be sufficiently accurate}. {\blc The following definition is adapted from}~\cite{chen2018stochastic}.
\begin{definition}\label{probestim1}
{\blc	Let $\ef>0$ be a fixed constant and $f_{x}$ be an estimate of $\fd(x)$. Then $f_{x}$ is said to be an $\ef$-accurate estimate of $\fd(x)$  for a given $\dpl${\blc ,} if}
	\begin{equation}\label{estim1}
	\abs{f_{x}-\fd(x)}\leq \ef(\dpl)^2.\nonumber
	\end{equation}
\end{definition}

Note that{\blu ,} unlike~\cite{chen2018stochastic,wang2019stochastic}, $\ef$ does not play a crucial role in the convergence analysis but allows to adjust the initial amplitude of the so-called uncertainty interval $\mathcal{I}_{\gamma,\ef}(\dpl)$ that will be introduced later. {\blc The next result provides sufficient information to determine the iteration type.}
\begin{proposition}\label{decrease1}
	Let $f^k_0$ and $f^k_s$ be $\ef$-accurate estimates of $\fd(x^k)$ and $\fd(x^k+s^k)$, respectively, and let $\gamma\in (2,+\infty)$ be a fixed constant. Then the followings hold:
\begin{eqnarray*}
		&\text{{\blc if}}&  \fsk -\fok \leq -\gamma\ef(\dpl)^2,\ \ \text{{\blc then}}\ \ \fd(x^k+s^k)-\fd(x^k)<0,\\
		\text{and}\!\!\! &\text{{\blc if}}&  \fsk -\fok \geq \gamma\ef(\dpl)^2,\quad \ \text{{\blc then}}\ \ \fd(x^k+s^k)-\fd(x^k)>0.
\end{eqnarray*}
\end{proposition}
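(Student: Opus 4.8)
The plan is to derive both implications from a single triangle-inequality decomposition of the true objective gap $\fd(x^k+s^k)-\fd(x^k)$ through the two estimates, and then to use the $\ef$-accuracy bounds from Definition~\ref{probestim1} together with the hypothesis $\gamma>2$. First I would write
\[
\fd(x^k+s^k)-\fd(x^k) = \bigl(\fd(x^k+s^k)-\fsk\bigr) + \bigl(\fsk-\fok\bigr) + \bigl(\fok-\fd(x^k)\bigr),
\]
so that the ``controllable'' middle term $\fsk-\fok$ is separated from the two estimation errors, each of which is bounded in absolute value by $\ef(\dpl)^2$ because $\fok$ and $\fsk$ are $\ef$-accurate estimates of $\fd(x^k)$ and $\fd(x^k+s^k)$ for the given $\dpl$.

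For the first implication, I would assume $\fsk-\fok\leq-\gamma\ef(\dpl)^2$ and bound the decomposition from above: the first and third terms are each at most $\ef(\dpl)^2$, so
\[
\fd(x^k+s^k)-\fd(x^k) \leq \ef(\dpl)^2 - \gamma\ef(\dpl)^2 + \ef(\dpl)^2 = (2-\gamma)\,\ef(\dpl)^2.
\]
Since $\ef>0$, $\dpl>0$, and $\gamma>2$, the right-hand side is strictly negative, giving $\fd(x^k+s^k)-\fd(x^k)<0$.

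For the second implication, I would instead assume $\fsk-\fok\geq\gamma\ef(\dpl)^2$ and bound the same decomposition from below, using that the first and third terms are each at least $-\ef(\dpl)^2$:
\[
\fd(x^k+s^k)-\fd(x^k) \geq -\ef(\dpl)^2 + \gamma\ef(\dpl)^2 - \ef(\dpl)^2 = (\gamma-2)\,\ef(\dpl)^2 > 0,
\]
again by $\gamma>2$. This completes both cases.

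There is essentially no hard step here: the only thing to be careful about is the direction of each estimation-error bound (upper bound in the first case, lower bound in the second) and the observation that the constant $\gamma>2$ is precisely what makes the residual coefficients $2-\gamma$ and $\gamma-2$ have the required strict sign. If anything, the ``obstacle'' is purely expository, namely making clear that the cushion $\gamma\ef(\dpl)^2$ built into the acceptance test is exactly twice the worst-case total estimation error $2\ef(\dpl)^2$, so that a sufficiently large observed gap in the estimates certifies a true decrease (or increase) despite the noise.
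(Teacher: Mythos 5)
Your proof is correct and follows exactly the paper's approach: the paper's own proof consists precisely of the three-term decomposition you wrote down, with the rest declared ``immediate'' from Definition~\ref{probestim1}. You have simply filled in the two one-line estimates that the authors left to the reader.
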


\begin{proof}
	The proof is immediate using Definition~\ref{probestim1} and {\blc observing} that
	\[\fd(x^k+s^k)-\fd(x^k)=\fd(x^k+s^k)-\fsk+\left(\fsk-\fok\right)+\fok-\fd(x^k). \] 
\end{proof}

{\blc The following definition distinguishes three types of iterations: successful, certain unsuccessful and uncertain unsuccessful.}
\begin{definition}\label{decreasedef}
Let $f^k_0$ and $f^k_s$ be $\ef$-accurate estimates of $\fd(x^k)$ and $\fd(x^k+s^k)$, respectively, and let $\gamma\in (2,+\infty)$ be a fixed constant. Then the iteration is called:
\[
\left\{
\begin{array}{ll}
\text{successful} & \mbox{if }\ \ \fsk -\fok \leq -\gamma\ef(\dpl)^2, \\
\text{unsuccessful and certain} & \mbox{if }\ \ \fsk -\fok \geq \gamma\ef(\dpl)^2, \\
\text{unsuccessful and uncertain} & \mbox{if }\ \ \fsk -\fok\in \mathcal{I}_{\gamma,\ef}(\dpl):= \left]-\gamma\ef(\dpl)^2,\gamma\ef(\dpl)^2\right[
\end{array}
\right. 
\]
where $\mathcal{I}_{\gamma,\ef}(\dpl)$ is the so-called {\blc \it uncertainty interval that is reduced during uncertain unsuccessful iterations}.
\end{definition}
{\blc Let $\tau\in (0,1)\cap\mathbb{Q}$ be a fixed constant} {\blc and $\hat{z}\in \N$  be a large fixed integer}.
{\blc Note that for the needs of the convergence analysis of Section~\ref{convAnalysis}, unlike MADS, the frame size parameter of StoMADS is supposed to be bounded above by a positive fixed constant $\dmax$ in order for the random frame size parameter $\Dp$ that will be introduced in the next subsection to be integrable. }

During the SEARCH or POLL step, {\blc if} the {\blc \it sufficient decrease condition} $\fsk -\fok \leq -\gamma\ef(\dpl)^2$ is {\blc satisfied} for some direction $s^k=\dm d$, then the iterate $x^k+s^k$ is successful according to Proposition~\ref{decrease1}. Hence, the current iterate and the frame size parameter are updated respectively according to $x^{k+1}=x^k+s^k$ and  {\blc $\delta_p^{k+1}=\min\{\tau^{-2}\dpl,\dmax\}$}, and then a new iteration is initiated with a new mesh size parameter $\delta_m^{k+1}$ which satisfies $\delta_m^{k+1}=\min\{\delta_p^{k+1},(\delta_p^{k+1})^2\} $. 

If no improved mesh point is found during the SEARCH step, then the POLL step is invoked and if the condition  $\fsk -\fok \leq -\gamma\ef(\dpl)^2$ does not hold, the iterate is unsuccessful according to Proposition~\ref{decrease1}. StoMADS presents two types of unsuccessful iterations: {\it certain unsuccessful iterations} and {\it uncertain unsuccessful iterations}.
In both {\it certain} and {\it uncertain unsuccessful iterations}, the current iterate is not updated, i.e $x^{k+1}=x^k$ and the corresponding frame $\mathcal{F}^k$ is said to be a {\it minimal frame} with {\it minimal frame center} $x^k$, also called a {\it mesh local optimizer}~\cite{AudIhaLedTrib2016}. However, one may notice that if the unsuccessful iteration is certain, then the frame size parameter is reduced according to $\delta_p^{k+1}=\tau^{2}\dpl$ {\blc so that the resolution of the mesh can be increased, thus allowing the evaluation of $\fe$ and hence estimates computation at trial mesh points that are closer to the current solution}. {\blc Note that unlike~\cite{AuHa2017}, the use of $\tau^{2}$ instead of $\tau$ has been motivated by the need to reduce the frame size parameter less aggressively during uncertain unsuccessful iterations as claimed next.} Indeed, in the case of uncertain unsuccessful iterations, i.e, when $\fsk -\fok $ belongs to the uncertainty interval $\mathcal{I}_{\gamma,\ef}(\dpl)$, the frame size parameter is reduced {\blc less aggressively}, specifically according to $\delta_p^{k+1}=\tau\dpl$, so that the uncertainty interval is reduced and as before, a new iteration is initiated with a new mesh size parameter $\delta_m^{k+1}$. {\blc An overview of the algorithm and its details are presented  in Figure~\ref{overview} and} Algorithm~\ref{algomads}.

\begin{figure}[]
	\centering
{\begin{tikzpicture} 
\draw [box] (-2,-1) rectangle (2,1);
\node[draw,ellipse,dashed,scale=0.8] (hidf) at (0,-0.23)  {\begin{tabular}{l}
	Hidden true \\	
	$\ $function $f$\\
	\end{tabular}};
\draw (0,0.4) node[above, scale=0.8]{Stochastic blackbox $f_\Theta$};
\node [draw, diamond, text width=4cm, text centered, aspect=2, scale=0.8, xscale = 1, fill = gray!15] (desacc) at (2.5,-5) {The desired accuracy of the estimate of $f(x)$ is reached}; 
\node[box, scale=0.8, text width=4cm, text centered] (ret1) at (-3,-5) {Re-evaluate $x$};
\node[draw,rectangle,rounded  corners=0pt, fill=gray!15, minimum  width=10cm, minimum  height=2cm, scale=0.8, text width=12cm, text centered] (pollstep) at (0,-8) {Update of the SEARCH or the POLL and the iterate $x$};
\node[draw,rectangle,rounded  corners=0pt, minimum  width=15cm, minimum  height=8.5cm, scale=0.8, text width=6cm, text centered] (rect2) at (0,-6) {};
\draw (-3.8,-3.5) node[above, scale=1]{\textbf{StoMADS}};
\draw  (2,0)  -|  (7,-5) node[pos = 0.25, above, scale=0.8]{$f_\Theta(x)$};
\draw[->,>=latex]  (7,-5)  --  (desacc);
\draw[->,>=latex]  (desacc)  --  (2.5,-7.21) node[pos = 0.5, right, scale=0.8]{yes};
\draw[->,>=latex]  (desacc)  --  (ret1) node[pos = 0.5, above, scale=0.8]{no};
\draw[->,>=latex]  (ret1)  -- (-6.5,-5) ;
\draw  (-6.5,-5)  -|  (-7,0);
\draw[->,>=latex]  (-7,0)  --  (-2,0) node[pos = 0.5, above, scale=0.8]{Current iterate $x$};
\draw[->,>=latex]  (pollstep)  -| (-7,-6) ;
\draw  (-7,-6.2)  -- (-7,-5) ;
\end{tikzpicture}}
\centering	
\caption[]{\small{Overview of the StoMADS algorithm. Given a current iterate $x$, an estimate of a desired accuracy of $f(x)$ is computed using the blackbox $f_\Theta$ evaluations. {\blc Such estimate is used} during the SEARCH or the POLL to check success and failure. {\blc Then, $x$ is updated unless the current iteration is unsuccessful} and a new iteration is initiated.}}
\label{overview}
\end{figure}
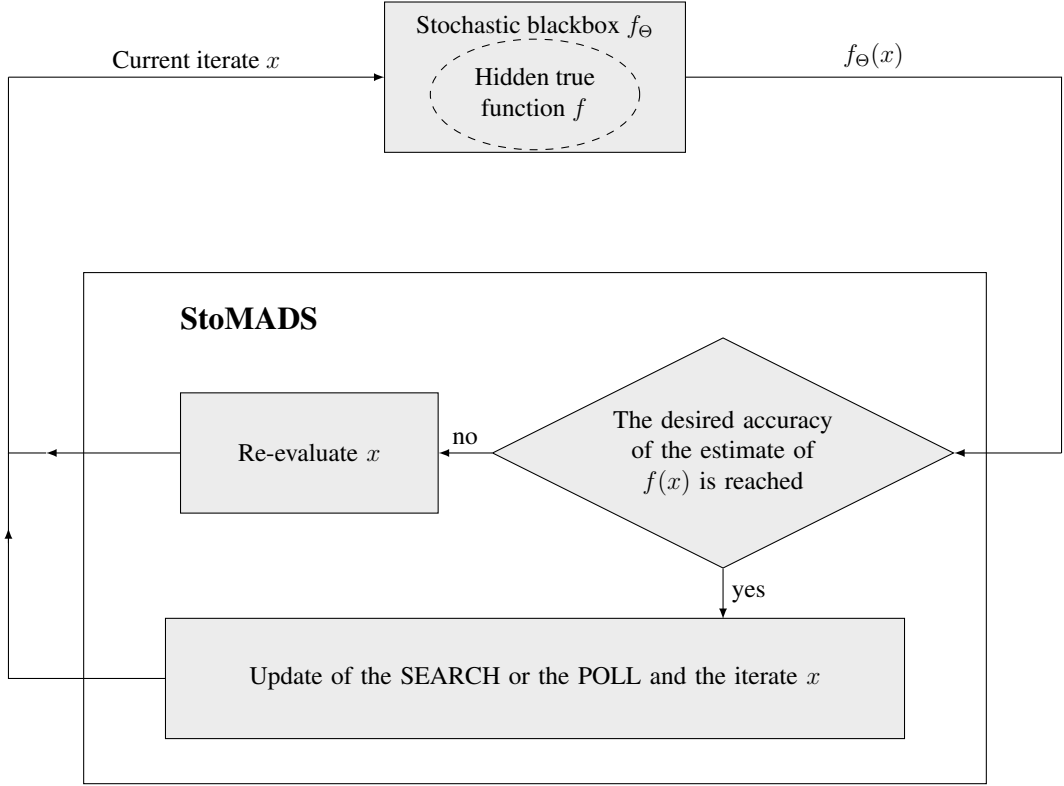
\begin{figure*}[ht!]
\begin{algorithm}[H]
	\caption{StoMADS}
	\label{algomads}
	\textbf{[0] Initialization}\\
	\hspace*{10mm}Choose $x^0\in\rn$, $\delta^0_p=1$, ${\blc \tau=\frac{1}{2}}$, $\ef>0$, $\epsilon_{stop}\geq 0$, $\gamma>2$ and ${\blc \hat{z}\in\N^*}$.\\
	\hspace*{10mm}Set the iteration counter $k \gets 0$.\\
	\textbf{[1] Parameter Update}\\
	\hspace*{10mm}Set the mesh size parameter to $\delta^k_m\gets\min \{\dpl,(\dpl)^2\}$.\\   
	{\blc \textbf{[2] Search}\\
	\hspace*{10mm}Select a finite subset $\mathcal{S}^k$ of $\mathcal{M}^k$.\\
	\hspace*{10mm}Obtain estimates $\fok$ and $\fsk$ of $\fd$ respectively at $x^k$ and $x^k+s^k\in \mathcal{S}^k$, using blackbox\\
    \hspace*{10mm}{\blc evaluations.}\\	
    \hspace*{10mm}If $\fsk -\fok \leq -\gamma\ef(\dpl)^2$ for some $x^k+s^k\in \mathcal{S}^k$,\\
    \hspace*{18mm}{\blc set} $x^{k+1}\gets x^k+s^k$ and $\delta_p^{k+1}\gets{\blc \min\{\tau^{-2}\dpl,\dmax\}}$ and go to \textbf{[4]}. \\
    \hspace*{10mm}Go to \textbf{[3]}.}\\
	\textbf{[3] Poll}\\
	\hspace*{10mm}Select a positive spanning set $\mathbb{D}^k_p$ such that $x^k+\dm d \in \mathcal{F}^k$ for all $d\in \mathbb{D}^k_p$.\\  
	\hspace*{10mm}Obtain estimates $\fok$ and $\fsk$ of $\fd(x^k)$ and $\fd(x^k+s^k)$, 
	respectively, {\blc using blackbox}\\
	\hspace*{10mm}{\blc evaluations.}\\
	\hspace*{10mm}\textbf{{\blc Success}}\\
	\hspace*{10mm}If $\fsk -\fok \leq -\gamma\ef(\dpl)^2$ for some $s^k=\dm d^k\in \{\dm d: d\in \mathbb{D}^k_p\}$,  \\
	\hspace*{10mm}{\blc set} $x^{k+1}\gets x^k+s^k$, and $\delta_p^{k+1}\gets{\blc \min\{\tau^{-2}\dpl,\dmax\}}$. \\
	\hspace*{10mm}\textbf{{\blc Failure}}\\
	\hspace*{18mm}\textbf{{\blc Certain failure:}} Otherwise if $\fsk -\fok \geq \gamma\ef(\dpl)^2$ for all $s^k\in \{\dm d: d\in \mathbb{D}^k_p\}$, \\
	\hspace*{18mm}set $x^{k+1}\gets x^k$ and $\delta^{k+1}_p\gets\tau^2\dpl$. \\
	\hspace*{18mm}\textbf{{\blc Uncertain failure:}} Otherwise, set $x^{k+1}\gets x^k$ and $\delta^{k+1}_p\gets\tau\dpl$.\\
	\textbf{[4] Termination}\\
	\hspace*{10mm}If $\dpl\geq\epsilon_{stop}$, \\
	\hspace*{10mm}set $k\gets k+1$ and go to \textbf{[1]}.\\
	\hspace*{10mm}Otherwise stop.
\end{algorithm}
\caption[]{\small{Pseudo code of the StoMADS algorithm. At iteration $k$, during the SEARCH or POLL, success {\blu or failure is determined} using {\blc information provided by both estimates} $\fok$ and $\fsk$ {\blc in order to update} the current iterate $x^k$ and the frame size parameter $\dpl$. A new iteration is then initiated with a new mesh size parameter $\delta^{k+1}_m$ unless a stopping criterion is met.}}
\end{figure*}


\subsection{Probabilistic estimates}
All the random variables in this work are defined on the same probability  space $(\Omega,{\blc \mathcal{G}}, \mathbb{P})$, $\Omega$ being the sample space, ${\blc \mathcal{G}}$ being a $\sigma$-algebra, that is a collection of all the events (subsets of $\Omega$) and $\mathbb{P}$ is a probability measure, that is a function that returns an event's probability. Any single outcome from the sample space $\Omega$ will be denoted by $\omega$. In general, random variables will be denoted by uppercase letters within the proposed algorithmic framework, while their realizations will be denoted by lowercase letters.
 
The estimates $\fok$ and $\fsk$ of function values are constructed at each iteration of Algorithm~\ref{algomads}, using {\blc evaluations} of the noisy blackbox $f_\Theta$. Because of the randomness of $f_\Theta$, such estimates can be respectively considered as realizations of random estimates {\blc denoted by} $\Fok$ and $\Fsk$, obtained based on some random samples of the stochastic function $f_\Theta(x)$. The behavior of $\Fok$ and $\Fsk$ then influences each iteration of Algorithm~\ref{algomads} (as it is the case in~\cite{chen2018stochastic,paquette2018stochastic,wang2019stochastic}) in such a way that the iterates {\blu $X^k$}, the polling directions {\blu $D^k$}, the mesh size parameter {\blu $\Dm$} and the frame size parameter {\blu $\Dp$} are also random quantities. {\blu $d^k=D^k(\omega)$, $x_k=X^k(\omega)$, $\dpl=\Dp(\omega)$ and $\dm=\Dm(\omega)$ denote respectively realizations of the random variables $D^k,X^k,\Dp$ and $\Dm$. Similarly, $S^k$ denotes the random variable with realizations $s^k$;  $\fok=\Fok(\omega)$ and $\fsk=\Fsk(\omega)$, where $\{\Fok,\Fsk\}$ denote estimates of $f(X^k)$ and $f(X^k+S^k)$ respectively. In other words, Algorithm~\ref{algomads} results in a stochastic process $\{X^k,S^k,\Dp,\Dm,\Fok,\Fsk\}$.} {\blc However, note that since $X^k$ is a random variable and not a vector of $\rn$, the notation ``$f(X^k)$'' is used to denote the random variable with realizations $f(X^k(\omega))$.}

The goal of this work is to show that the resulting stochastic process converges with probability one under some assumptions on  $\{\Fok,\Fsk\}$. In particular, such estimates will be assumed to be accurate with a sufficiently large but fixed probability, {\blc \it ``conditioned on the past''}~\cite{blanchet2016convergence,chen2018stochastic}.  

The notion of conditioning on the past is formalized as follows as proposed in~\cite{chen2018stochastic,paquette2018stochastic}. Let $\mathcal{F}^F_{k-1}$ denote the $\sigma$-algebra generated by $F^0_0, F^0_s,F^1_0, F^1_s,\dots, F^{k-1}_0$ and $F^{k-1}_s$. For completeness, $\mathcal{F}^F_{-1}$ {\blc is set to equal} $\sigma(x^0)$. Thus, $\{\mathcal{F}^F_{k}\}_{k\geq -1}$ is a filtration, that is a subsequence of increasing $\sigma$-algebras of~${\blc \mathcal{G}}$. Closeness or sufficient accuracy of function estimates is measured using the current frame size parameter. This notion is formalized, using the following definition which is a modified version of those in~\cite{blanchet2016convergence,cartis2018,chen2018stochastic,paquette2018stochastic} and which is similar to that in~\cite{wang2019stochastic}.

\begin{definition}\label{probestim2}
A sequence of random estimates $\{F^k_0,F^k_s\}$ is said to be $\beta$-probabilistically $\ef$-accurate with respect to the corresponding sequence $\{X^k, S^k, \Dp\}$ if the events 
\begin{equation}\label{event1}
J_k=\{F^k_0, F^k_s,\ \text{are}\ \ef\text{-accurate estimates of}\ \fd(x^k)\ \text{and}\ \fd(x^k+s^k),\ \text{respectively}\}\nonumber
\end{equation}
satisfy the following submartingale-like condition
\begin{equation}\label{beta1}
{\blc \pr{J_k\ |\ \mathcal{F}^F_{k-1}}}={\blc \E{\ijk\ |\ \mathcal{F}^F_{k-1}}}\geq \beta,\nonumber
\end{equation}
where $\ijk$ denotes the indicator function of the event $J_k$, that is $\ijk=1$ if $\omega\in J_k$ and $0$ otherwise.

{\blc An iteration $k$ is called ``{\it true}'' and an estimate is called ``{\it good}'' if $\ijk=1$. Otherwise the iteration is called ``{\it false}'' and the estimate is called ``{\it bad}''.}
\end{definition}

The following definition of $p$-integrable random variables~\cite{bhattacharya2007basic} is useful for the analysis of Algorithm~\ref{algomads}. 

\begin{definition}\label{pIntegrableDef}
Let $p\in [1,+\infty[$ be an integer and $(\Omega,{\blc \mathcal{G}},\mathbb{P})$ be a probability space. Then the Space $\lp$ of so-called $p$-integrable random variables is the set of all real-valued random variables $X$ such that
\begin{equation}\label{pIntegrableIneq}
\normp{X}:=\left(\int_{\Omega} \abs{X(\omega)}^p\pr{d\omega} \right)^{\frac{1}{p}}=\left(\E{\abs{X}^p} \right)^{\frac{1}{p}}<+\infty.\nonumber
\end{equation}

\end{definition}

{ In order for the random variable $f(X^k)$ to be integrable so that the conditional expectation {\blc $\E{f(X^k)\ |\ \mathcal{F}_{k-1}^F}$} can be well defined~\cite{bhattacharya2007basic} for the needs of the analysis of StoMADS, the following is assumed.

\begin{assumption}\label{assumpOniterates} {\blc The} objective {\blc function} $f$ is locally $\tilde{L}$-Lipschitz continuous {\blc everywhere and all} iterates $x^k$ generated by Algorithm~\ref{algomads} lie in a compact set $\mathcal{X}$.
\end{assumption}

Note that a similar assumption was made in the stochastic framework of~\cite{blanchet2016convergence} in order to ensure that there exists an upper bound $F_{\max}$ satisfying $f(x)\leq F_{\max}$ for all $x$ in a bounded and open set {\blc containing} all the iterates $x^k$ of the analyzed algorithm.

The following result shows that $f(X^k)$ is integrable if Assumption~\ref{assumpOniterates} holds.
\begin{proposition}\label{fIntegr}
{\blc If} Assumption~\ref{assumpOniterates} holds, then both $\Dp$ and  $f(X^k)\in\lun$ for all $k$.
\end{proposition}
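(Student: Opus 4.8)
The plan is to prove the two integrability statements separately, in each case reducing the claim to a deterministic pointwise-in-$\omega$ bound on the random variable, after which $\mathbb{L}^1$-membership is immediate once measurability is noted.

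First I would handle $\Dp$. I would argue by induction on $k$ that every realization $\dpl = \Dp(\omega)$ satisfies $0 < \dpl \le \dmax$. The base case is $\delta^0_p = 1$, which is positive and satisfies $1 \le \tau^{-\hat{z}} = \dmax$ since $\tau \in (0,1)$ and $\hat{z} \ge 0$. For the inductive step I would inspect the three parameter-update rules of Algorithm~\ref{algomads}: a successful iteration sets $\delta^{k+1}_p = \min\{\tau^{-2}\dpl, \dmax\} \le \dmax$; a certain unsuccessful iteration sets $\delta^{k+1}_p = \tau^2 \dpl < \dpl \le \dmax$; and an uncertain unsuccessful iteration sets $\delta^{k+1}_p = \tau \dpl < \dpl \le \dmax$. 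All three clearly keep the value positive. Since $\Dp$ is moreover $\mathcal{F}^F_{k-1}$-measurable (it is determined by $x^0$ and the estimates $\{F^i_0, F^i_s\}_{i \le k-1}$ through the update rule), the sure bound $0 < \Dp \le \dmax$ yields $\norme{\Dp}_1 = \E{\Dp} \le \dmax < +\infty$, i.e. $\Dp \in \lun$.

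Next I would handle $f(X^k)$. Here I would invoke Assumption~\ref{assumpOniterates}: local Lipschitz continuity makes $f$ continuous on $\rn$, hence bounded on the compact set $\X$, so there is a constant $F_{\max} := \max_{x \in \X} \abs{f(x)} < +\infty$. Because the same assumption forces $X^k(\omega) \in \X$ for every $\omega$, I get $\abs{f(X^k(\omega))} \le F_{\max}$ for all $\omega$; and $f(X^k)$ is a genuine real-valued random variable, being the composition of the random vector $X^k$ with the continuous (hence Borel-measurable) map $f$. Therefore $\E{\abs{f(X^k)}} \le F_{\max} < +\infty$, so $f(X^k) \in \lun$, which in particular makes the conditional expectation $\E{f(X^k)\mid\mathcal{F}^F_{k-1}}$ well defined.

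The only part requiring genuine care --- the main obstacle, such as it is --- is the uniform boundedness of $\Dp$: one must check that the cap $\min\{\tau^{-2}\dpl, \dmax\}$ on the successful update, combined with the fact that $\tau \in (0,1)$ makes the other two updates strictly decreasing, confines every realization to $(0, \dmax]$. This is exactly why the frame size parameter of StoMADS was artificially bounded above by $\dmax$ in the algorithm's design, as anticipated in the remark following Definition~\ref{decreasedef}. Everything else is the elementary observation that a bounded measurable random variable is integrable.
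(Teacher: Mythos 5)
Your proposal is correct and follows essentially the same route as the paper: bound $f$ on the compact set $\mathcal{X}$ via local Lipschitz continuity to get $\abs{f(X^k)}\leq F_{\max}$ surely, and use the sure bound $\Dp\leq\dmax$ to conclude integrability of both random variables. The only difference is that you verify $\Dp\leq\dmax$ by an explicit induction through the three update rules, whereas the paper simply takes this bound as immediate from the algorithm's design; this is a harmless (and slightly more careful) elaboration, not a different argument.
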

\begin{proof}
{\blc The function $f$ is bounded on $\mathcal{X}$ since $f$ is locally Lipschitz and $\mathcal{X}$ is compact.} Consequently, there exists a finite constant $F_{\max}$ such that all the iterates $x^k$ lying in $\mathcal{X}$ satisfy $\abs{f(x^k)}\leq F_{\max}$. In other words, all realizations $f(X^k(\omega))$ of the random variable $f(X^k)$ satisfy $\abs{f(X^k(\omega))}\leq F_{\max}$. Therefore, $\E{\abs{f(X^k)}}:=\int_{\Omega}\abs{f(X^k(\omega))} \mathbb{P}(d\omega)\leq F_{\max}<+\infty$.

However, the integrability of $\Dp$ and hence, that of $\Dm$ follows straightforwardly from the fact that for all $\omega\in\Omega$, $\Dp(\omega)\leq \dmax$. Indeed, $\E{\abs{\Dp}}:=\int_{\Omega}\abs{\Dp(\omega)} \mathbb{P}(d\omega)\leq \dmax<+\infty$.
\end{proof}
}

The following key assumption similar to that made in~\cite{paquette2018stochastic} on the nature of the stochastic information in Algorithm~\ref{algomads} will be useful for the convergence analysis presented in Section~\ref{convAnalysis}.

\begin{assumption}\label{assump24} Let $\ef>0$ be {\blc the constant} of Proposition~\ref{decrease1}. The following holds for the {\blc random} quantities {\blc derived from} the algorithm:
\begin{itemize}
\item[(i)] The sequence of estimates $\{F^k_0,F^k_s\}$ generated by Algorithm~\ref{algomads} is $\beta$-probabilistically $\ef$-accurate for some $\beta\in(0,1)$.
\item[(ii)]There exists $\kappa_F>0$ such that the sequence of estimates $\{F^k_0, F^k_s\}$ generated by Algorithm~\ref{algomads} satisfies the following $\kappa_F$-variance condition for all $k\geq 0$,
\begin{eqnarray}\label{varcond1}
& &\E{\abs{\Fsk-\fd(X^k+S^k)}^2 |\ \mathcal{F}^F_{k-1}}\leq (\kappa_F)^2(\Dp)^4\nonumber\\
\text{and}\quad & &\E{\abs{\Fok-\fd(X^k)}^2 |\ \mathcal{F}^F_{k-1}}\leq (\kappa_F)^2(\Dp)^4.
\end{eqnarray}
\end{itemize}
\end{assumption}

\begin{remark} In regard to Assumption~\ref{assump24}, note that the role of the frame size parameter $\Dp$ in the stochastic framework of this work is twofold. First, it updates the resolution of the mesh (which, as it will be seen, gets infinitely fine) as mentioned earlier, and second, it adaptively controls the variance which again, as it will be seen, will be driven to zero when Algorithm~\ref{algomads} progresses, thus allowing it to reach a desired accuracy. Therefore, no other ``control size'' parameter {\blc is} required {\blc for} the analysis in order to control the variance as needed and described for the line search method proposed in~\cite{paquette2018stochastic}.
As in~\cite{paquette2018stochastic}, note that at point (ii) of Assumption~\ref{assump24}, the integrability of random quantities $\abs{\Fok-\fd(X^k)}^2$ and $\abs{\Fsk-\fd(X^k+S^k)}^2$ and hence straightforwardly that of $\abs{\Fok-\fd(X^k)}$ and $\abs{\Fsk-\fd(X^k+S^k)}$ is implicitly assumed for all~$k$.
\end{remark}

Using this key assumption on the accuracy of function estimates, a lower bound on $\beta$, defined in term of $\tau$, $\kappa_F$ and $\ef$ will be derived, under which convergence of Algorithm~\ref{algomads} holds. Before delving into the convergence analysis at Section~\ref{convAnalysis},  next {\blc is stated and proved} a useful lemma {\blc slightly modified from}~\cite{paquette2018stochastic}, showing the relationship between the variance assumption on the function values and the probability of obtaining bad estimates.

\begin{lemma}\label{lemma25}
Let Assumption~\ref{assump24} hold. Suppose $\{X^k, F^k_0, F^k_s, \Dp\}$ is a random process generated by Algorithm~\ref{algomads}. Then for every $k\geq 0$, 
\begin{eqnarray}\label{varcond2}
& &\E{\ijkc\abs{\Fsk-\fd(X^k+S^k)}\ |\ \mathcal{F}^F_{k-1}}\leq (1-\beta)^{1/2}\kappa_F(\Dp)^2\nonumber\\
\text{and}\quad & &\E{\ijkc\abs{\Fok-\fd(X^k)}\ |\ \mathcal{F}^F_{k-1}}\leq (1-\beta)^{1/2}\kappa_F(\Dp)^2.\nonumber
\end{eqnarray}
\end{lemma}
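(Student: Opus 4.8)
The statement is a direct consequence of the conditional Cauchy--Schwarz inequality combined with the two parts of Assumption~\ref{assump24}. I will write out the argument for the $\Fsk$ bound; the $\Fok$ bound is identical with $X^k+S^k$ replaced by $X^k$.

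First I would recall that $\ijkc$ is the indicator of $\bar{J_k}$, hence $\ijkc^2=\ijkc$, and that by Assumption~\ref{assump24}(i),
\[
\E{\ijkc\mid\mathcal{F}^F_{k-1}}=1-\E{\ijk\mid\mathcal{F}^F_{k-1}}\leq 1-\beta .
\]
Next, I would apply the conditional Cauchy--Schwarz inequality to the product of $\ijkc$ and $\abs{\Fsk-\fd(X^k+S^k)}$ (the integrability needed to make the conditional expectations well defined is guaranteed: $\ijkc$ is bounded, and $\abs{\Fsk-\fd(X^k+S^k)}^2$ is integrable as noted in the remark following Assumption~\ref{assump24}). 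This gives
\[
\E{\ijkc\abs{\Fsk-\fd(X^k+S^k)}\mid\mathcal{F}^F_{k-1}}
\leq\left(\E{\ijkc\mid\mathcal{F}^F_{k-1}}\right)^{1/2}\left(\E{\abs{\Fsk-\fd(X^k+S^k)}^2\mid\mathcal{F}^F_{k-1}}\right)^{1/2}.
\]

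Finally I would bound the first factor by $(1-\beta)^{1/2}$ using the inequality above, and the second factor by $\kappa_F(\Dp)^2$ using the $\kappa_F$-variance condition of Assumption~\ref{assump24}(ii), since $(\kappa_F)^2(\Dp)^4$ has square root $\kappa_F(\Dp)^2$ (note $\Dp\ge 0$). Here it is worth pointing out explicitly that $\Dp$ is $\mathcal{F}^F_{k-1}$-measurable---it is the frame size parameter set at the end of iteration $k-1$ from quantities in $\mathcal{F}^F_{k-1}$---so it may be treated as a constant under the conditional expectation, which is exactly what Assumption~\ref{assump24}(ii) already encodes. Multiplying the two bounds yields $(1-\beta)^{1/2}\kappa_F(\Dp)^2$, as claimed. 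The $\Fok$ inequality follows verbatim.

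I do not anticipate any real obstacle: the only points requiring a word of care are the validity of the conditional Cauchy--Schwarz step (handled by the implicit integrability assumptions) and the $\mathcal{F}^F_{k-1}$-measurability of $\Dp$, both of which are already in place from the preceding discussion.
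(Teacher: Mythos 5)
Your proposal is correct and follows essentially the same route as the paper: the conditional Cauchy--Schwarz inequality applied to $\ijkc$ times the error term, with the first factor bounded by $(1-\beta)^{1/2}$ via Assumption~\ref{assump24}(i) and the second by $\kappa_F(\Dp)^2$ via the variance condition~(ii). Your remarks on integrability and the $\mathcal{F}^F_{k-1}$-measurability of $\Dp$ match the care the paper itself takes on these points.
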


\begin{proof}
The result is shown for $\Fok-\fd(X^k)$ using ideas derived from~\cite{paquette2018stochastic}, specifically by making use of the conditional Cauchy-Schwarz inequality~\cite{bhattacharya2007basic}, but the proof for $\Fsk-\fd(X^k+S^k)$ is the same. However, the proof here is slightly modified compared to that in~\cite{paquette2018stochastic} in order to emphasize the integrability of the random variables that define the conditional expectations. 

Since it follows from Assumption~\ref{assump24} that  $\abs{\Fok-\fd(X^k)}\in\ldeux$ and that $\ijkc\in\ldeux$ trivially, then $\ijkc\abs{\Fok-\fd(X^k)}\in\lun$ thanks to the Cauchy-Schwarz inequality~\cite{bhattacharya2007basic}. Thus, it follows from the conditional Cauchy-Schwarz inequality that 
\begin{eqnarray}
\E{ \ijkc\abs{\Fok-\fd(X^k)}\ |\  \mathcal{F}^F_{k-1}} &\leq& \left[\E{\ijkc\ |\ \mathcal{F}^F_{k-1}}\right]^{1/2} \left[\E{ \abs{\Fok-\fd(X^k)}^2 |\   \mathcal{F}^F_{k-1}}\right]^{1/2}\nonumber \\
&\leq& (1-\beta)^{1/2}\kappa_F(\Dp)^2,\nonumber
\end{eqnarray}
where the last inequality follows from~\eqref{varcond1} and the fact that $\E{\ijkc\ |\ \mathcal{F}^F_{k-1}}=\pr{\bar{J_k}\ |\ \mathcal{F}^F_{k-1}}\leq 1-\beta$ thanks to the point (i) of Assumption~\ref{assump24}.
\end{proof}

\subsection{{\blr Computation of probabilistic estimates}}\label{computation}

This section {\blu demonstrates} how random estimates $\Fok$ and $\Fsk$ satisfying Assumption~\ref{assump24} can be constructed in a simple {\blu random} noise framework {\blc and hence how deterministic estimates $\fok$ and $\fsk$ can be obtained using evaluations of the blackbox $f_\Theta$}. However, note that since full details about such estimates construction are already provided in~\cite{chen2018stochastic,paquette2018stochastic,wang2019stochastic}, they are not provided here again.   

Now recall that $f_\Theta$ denote{\blr s} the noisy available blackbox which is the  computable version of the numerically unavailable objective $f$ and consider the following typical noise assumption often used in stochastic optimization literature~\cite{chen2018stochastic}, i.e, suppose that the noise $\Theta$ is unbiased for all $f$, that is, 
\begin{eqnarray*}
\Esp_\Theta[f_\Theta(x)]&=&f(x), \! \quad\quad\quad{\blr \text{for all}\ x,}\\
\text{and}\quad
\text{Var}_\Theta[f_\Theta(x)]&\leq& V<+\infty, \quad{\blr \text{for all}\ x,}
\end{eqnarray*}
where $V>0$ is a constant. Let $\Theta_1$, $\Theta_2$, {\blr $\Theta_3$ and $\Theta_4$ be four} independent random variables following the same distribution as $\Theta$. Define estimates $\Fok$ and $\Fsk$ respectively by $\Fok = \frac{1}{p^k}\sum_{i=1}^{p^k}f_{\Theta_{1,i}}(x^k)$ and $\Fsk = \frac{1}{p^k}\sum_{i=1}^{p^k}f_{\Theta_{2,i}}(x^k+s^k)$, where {\blr $p^k$ denotes the sample size,}  $\Theta_{1,1}, \Theta_{1,2}, \dots,\Theta_{1,p^k}$ and $\Theta_{2,1}, \Theta_{2,2}, \dots,\Theta_{2,p^k}$ are independent random samples of $\Theta_1$ and $\Theta_2$ respectively.
Thus, the random estimates $\Fok$ and $\Fsk$ satisfy Assumption~\ref{assump24}, provided that $p^k\geq \frac{V}{(\ef)^2(\dpl)^4(1-\sqrt{\beta})}.$ {\blc By using the fact that the deterministic estimates $\fok$ and $\fsk$ are realizations of $\Fok$ and $\Fsk$, respectively, it is then obvious to notice that their respective values can be obtained by averaging $p^k$ realizations of $f_\Theta$, resulting from the evaluations of the stochastic blackbox, respectively at $x^k$ and $x^k+s^k$.} 

{\blr Finally, the following technique using fewer blackbox evaluations is proposed especially for blackboxes that are expensive in term of evaluations. First, recall that $x^{k+1}=x^k+s^k$ and $x^{k+1}=x^k$ respectively on successful and unsuccessful iterations and denote by $n^k\leq p^k$ the number of blackbox evaluations at a given point when constructing an estimate at the iteration $k$, with {\blr $n^0=p^0$}. Let $\Theta_{3,1}, \Theta_{3,2}, \dots,\Theta_{3,n^{k+1}}$ and  $\Theta_{4,1}, \Theta_{4,2}, \dots,\Theta_{4,n^{k+1}}$ be independent random samples of $\Theta_3$ and $\Theta_4$ respectively. Then, when the iteration $k$ is successful, by noticing that $\fsk=\frac{1}{n^k}\sum_{i=1}^{n^k}f_{\theta_{2,i}}(x^k+s^k)$, the estimate $f_0^{k+1}$ of $f(x^{k+1})$ is computed according to
\begin{equation}\label{comp1}
f_0^{k+1}=\frac{n^k\fsk +\sum_{j=1}^{n^{k+1}}f_{\theta_{3,j}}(x^{k+1})}{p^{k+1}}  
\end{equation} 
where $p^{k+1}=n^k+n^{k+1}$, while after an unsuccessful iteration $k$, $f_0^{k+1}$ is given by
\begin{equation}\label{comp2}
f_0^{k+1}=\frac{p^k\fok +\sum_{j=1}^{n^{k+1}}f_{\theta_{4,j}}(x^{k+1})}{p^{k+1}}  
\end{equation} 
where $p^{k+1}=p^k+n^{k+1}$, $\theta_{3,j}$ and $\theta_{4,j}$,  $j\in\{1,2,\dots,n^{k+1}\}$, are the realizations, respectively, of the random variables $\Theta_{3,j}$ and $\Theta_{4,j}$. Indeed, this procedure used in Section~\ref{sec4}, improves the estimates accuracy by making use of available samples at the current iterate during estimates computation, thus avoiding additional blackbox evaluations and seems to be very useful for blackboxes that are expensive in term of evaluations.}

\section{Convergence analysis}\label{convAnalysis} 
{\blr This section presents convergence results of StoMADS using ideas inspired by~\cite{chen2018stochastic,LaBi2016,paquette2018stochastic}}. {\blc They are the stochastic variant of those of~\cite{AuDe2006} for MADS.} The first result is a {\it zero-order} result~\cite{AuDeLe07}, i.e, there exists a subsequence of {\blc the} StoMADS random iterates with realizations on meshes getting infinitely fine and which converges to a limit with probability one. 
More formally, StoMADS generates a convergent subsequence $\{X^k\}_{k\in K}$ of {\blc random} iterates such that ${\lim}_{k\in K}\ X^k=\hat{X} $ almost surely provided that {\blc ${\lim}_{k\to+\infty}\ \Dm=0 $ with probability one;}
a result {\blc which is} stronger than the liminf-type result of~\cite{AuDe2006} about the convergence of the sequence of mesh size parameters.
Then, under assumptions {\blc of} the compactness of the set containing all iterates and local Lipschitz {\blc continuity of} $f$, {\blc a stochastic variant of the {\it first-order} necessary optimality condition}~\cite{AuDe2006,AuHa2017} via the Clarke derivative~\cite{Clar83a} is proved.

\subsection{{\blu Zero}-order convergence}
In order to prove the existence of an almost surely convergent subsequence of StoMADS random iterates with realizations on meshes getting infinitely fine, it is first proved that with probability one, the sequence of random mesh size parameters converges to zero almost surely and then, there exists an almost surely convergent subsequence of StoMADS random iterates.

The following lemma similar to those derived in~\cite{chen2018stochastic,paquette2018stochastic}, guarantees an amount of decrease in the objective function $f$ when true successful iterations occur.

\begin{lemma}\label{lemma43}
Let $\ef>0$ and $\gamma>2$ be fixed constants and suppose $\{\fok,\fsk\}$ are $\ef$-accurate estimates. If the iteration is successful, then the improvement in $f$ is bounded as follows
\begin{equation}\label{improvement}
\fd(x^{k+1})-\fd(x^k)\leq  -  (\gamma-2)\ef(\dpl)^2.
\end{equation}
\end{lemma}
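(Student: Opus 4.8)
The plan is to chain together three elementary facts: the $\ef$-accuracy of the two estimates (Definition~\ref{probestim1}), the defining inequality of a successful iteration (Definition~\ref{decreasedef}), and the update rule $x^{k+1}=x^k+s^k$ that applies precisely on successful iterations. Concretely, I would start from the telescoping identity already used in the proof of Proposition~\ref{decrease1},
\[
\fd(x^k+s^k)-\fd(x^k)=\bigl(\fd(x^k+s^k)-\fsk\bigr)+\bigl(\fsk-\fok\bigr)+\bigl(\fok-\fd(x^k)\bigr),
\]
and bound the three bracketed terms from above separately.

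For the middle term, since the iteration is successful we have $\fsk-\fok\leq -\gamma\ef(\dpl)^2$. For the first and third terms, $\ef$-accuracy of $\fsk$ and $\fok$ gives $\fd(x^k+s^k)-\fsk\leq \ef(\dpl)^2$ and $\fok-\fd(x^k)\leq \ef(\dpl)^2$. Adding the three estimates yields
\[
\fd(x^k+s^k)-\fd(x^k)\leq \ef(\dpl)^2-\gamma\ef(\dpl)^2+\ef(\dpl)^2=-(\gamma-2)\ef(\dpl)^2.
\]
Finally, invoking $x^{k+1}=x^k+s^k$ on a successful iteration rewrites the left-hand side as $\fd(x^{k+1})-\fd(x^k)$, which is exactly~\eqref{improvement}.

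There is essentially no obstacle here: the lemma is a deterministic, realization-wise statement (note it speaks of $x^k$, $\fok$, $\fsk$, not of the random variables), so no measurability or integrability considerations enter, and the only inputs are the two one-sided inequalities from $\ef$-accuracy plus the success condition. The one point worth stating explicitly, to keep the write-up clean, is that the assumed $\ef$-accuracy of $\{\fok,\fsk\}$ is used in the ``forward'' direction $\abs{f_x-\fd(x)}\leq\ef(\dpl)^2\Rightarrow f_x-\fd(x)\geq -\ef(\dpl)^2$ and $\fd(x)-f_x\geq -\ef(\dpl)^2$, i.e.\ we only need the two specific sign choices appearing above. The constant $\gamma>2$ guarantees the coefficient $\gamma-2$ is strictly positive, so the bound is a genuine decrease.
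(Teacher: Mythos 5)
Your proposal is correct and uses exactly the same argument as the paper: the identical telescoping decomposition of $\fd(x^k+s^k)-\fd(x^k)$, with the success condition bounding the middle term and $\ef$-accuracy bounding the two outer terms. No differences worth noting.
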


\begin{proof}
Since the iteration is successful and because the estimates are $\ef$-accurate,
\begin{eqnarray*}
\fd(x^k+s^k)-\fd(x^k) &=& \fd(x^k+s^k)-\fsk+(\fsk-\fok)+\fok-\fd(x^k)\\
&\leq& \ef(\dpl)^2 - \gamma\ef(\dpl)^2 + \ef(\dpl)^2 \\
&\leq& - (\gamma-2)\ef(\dpl)^2.
\end{eqnarray*}
\end{proof}

Before proving the following theorem that provides a result which is similar to that obtained in~\cite{chen2018stochastic} and which represents the corner stone of the convergence results in the present work, the following assumption on $f$ is needed.
\begin{assumption}\label{assumpOnf}
The function $f$ is 
bounded from below, i.e, there exists $f_{\min}\in\R$ such that $-\infty < f_{\min}\leq \fd(x)$,$\ $ for all $x\in\rn$. 
\end{assumption}
The following theorem states that the sequence of mesh size parameter $\{\Dm\}$ converges to zero with probability one.

\begin{theorem}\label{zerothOrder} 
Let Assumption~\ref{assumpOnf} be satisfied. Let $\ef>0$, $\tau\in(0,1)\cap\mathbb{Q}$ and $\gamma>2$. Let $\nu\in(0,1)$ be chosen such that 
\begin{equation}\label{nuchoice}
\frac{\nu}{1-\nu}\geq \frac{2(\tau^{-4}-1)}{\ef(\gamma-2)},
\end{equation}
and assume that Assumption~\ref{assump24} holds for $\beta\in(1/2,1)$ chosen such that 
\begin{equation}\label{betachoice}
\frac{\beta}{\sqrt{1-\beta}}\geq \frac{4\nu\kappa_F}{(1-\nu)(1-\tau^2)}.
\end{equation}
Then the sequence of mesh size parameter $\{\Dm\}$, generated by Algorithm~\ref{algomads} satisfies
\begin{equation}\label{almostsure}
\sum_{k=0}^{+\infty}\Dm < +\infty\quad\text{almost surely}.
\end{equation}
\end{theorem}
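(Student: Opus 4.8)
The plan is to build a potential function that, conditioned on the past, decreases in expectation by an amount proportional to $(\Dp)^2$ at every iteration, and then to telescope. For $k\ge 0$ define
\[
\Phi_k := \nu\, f(X^k) + (1-\nu)(\Dp)^2.
\]
By Proposition~\ref{fIntegr} the random variable $f(X^k)$ is integrable and $(\Dp)^2$ is bounded (since $\Dp\le\dmax$), so $\Phi_k\in\lun$ and the conditional expectation $\E{\Phi_{k+1}\ |\ \mathcal{F}^F_{k-1}}$ is well defined; by Assumption~\ref{assumpOnf}, $\Phi_k\ge\nu f_{\min}$, so $\{\Phi_k\}$ is bounded below. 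The core of the argument is the one-step estimate
\[
\E{\Phi_{k+1}-\Phi_k\ |\ \mathcal{F}^F_{k-1}}\le -c\,(\Dp)^2\qquad\text{for some constant }c>0,
\]
using throughout that $X^k$, $S^k$ and $\Dp$ are $\mathcal{F}^F_{k-1}$-measurable.

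To establish this estimate I would decompose $\Omega$ according to the event $J_k$ (a \emph{true} iteration) and its complement, and within each according to whether Algorithm~\ref{algomads} declares the iteration successful. On a true successful iteration, Lemma~\ref{lemma43} gives $f(X^{k+1})-f(X^k)\le-(\gamma-2)\ef(\Dp)^2$, while $\Delta_p^{k+1}\le\tau^{-2}\Dp$ gives $(\Delta_p^{k+1})^2-(\Dp)^2\le(\tau^{-4}-1)(\Dp)^2$; inequality~\eqref{nuchoice} is exactly what forces the net change to be at most $-\tfrac{1}{2}\nu\ef(\gamma-2)(\Dp)^2$, and since $\tfrac{1}{2}\nu\ef(\gamma-2)\ge(1-\nu)(\tau^{-4}-1)\ge(1-\nu)(1-\tau^2)$ this is $\le-(1-\nu)(1-\tau^2)(\Dp)^2$. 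On any unsuccessful iteration $X^{k+1}=X^k$ and $\Delta_p^{k+1}\in\{\tau\Dp,\tau^2\Dp\}$, so $\Phi_{k+1}-\Phi_k\le-(1-\nu)(1-\tau^2)(\Dp)^2$ as well. Hence $\ijk(\Phi_{k+1}-\Phi_k)\le-(1-\nu)(1-\tau^2)(\Dp)^2\,\ijk$, and taking conditional expectations with $\E{\ijk\ |\ \mathcal{F}^F_{k-1}}\ge\beta$ bounds the ``true'' part by $-(1-\nu)(1-\tau^2)\beta(\Dp)^2$. On a false iteration, if it is declared unsuccessful then $\Phi_{k+1}-\Phi_k\le0$; if declared successful then $X^{k+1}=X^k+S^k$ and the acceptance test $\Fsk-\Fok\le-\gamma\ef(\Dp)^2<0$ gives $f(X^{k+1})-f(X^k)\le|f(X^k+S^k)-\Fsk|+|\Fok-f(X^k)|$, whence in all cases
\[
\ijkc(\Phi_{k+1}-\Phi_k)\le\nu\,\ijkc\bigl(|f(X^k+S^k)-\Fsk|+|\Fok-f(X^k)|\bigr)+(1-\nu)(\tau^{-4}-1)(\Dp)^2\,\ijkc.
\]
Taking conditional expectations, Lemma~\ref{lemma25} controls the first term by $2\nu\kappa_F(1-\beta)^{1/2}(\Dp)^2$ and $\E{\ijkc\ |\ \mathcal{F}^F_{k-1}}\le1-\beta$ controls the second; adding the two parts, the lower bound~\eqref{betachoice} on $\beta$ (together with $\beta>1/2$) is calibrated so that the coefficient of $(\Dp)^2$ becomes a negative constant $-c$.

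Given the one-step estimate, I would take total expectations, sum over $k=0,\dots,N-1$, and use $\Phi_N\ge\nu f_{\min}$ together with $\Phi_0=\nu f(x^0)+(1-\nu)$ deterministic to obtain $c\sum_{k=0}^{N-1}\E{(\Dp)^2}\le\Phi_0-\nu f_{\min}$ for every $N$. Letting $N\to\infty$ and applying the monotone convergence (Tonelli) theorem yields $\E{\sum_{k\ge0}(\Dp)^2}=\sum_{k\ge0}\E{(\Dp)^2}<+\infty$, hence $\sum_{k\ge0}(\Dp)^2<+\infty$ almost surely. Since $\Dm=\min\{\Dp,(\Dp)^2\}\le(\Dp)^2$, the claimed bound $\sum_{k\ge0}\Dm<+\infty$ almost surely follows.

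The step I expect to be the main obstacle is the false--successful case: there both $f$ can increase and the frame size can grow, with no deterministic control; the increase of $f$ must be absorbed in expectation through the $\kappa_F$-variance condition via Lemma~\ref{lemma25}, while the growth of $(\Dp)^2$ is offset only because genuinely decreasing (true) iterations occur with probability at least $\beta$. Turning these competing terms into a strictly negative coefficient $-c$ is precisely where the quantitative hypotheses~\eqref{nuchoice} and~\eqref{betachoice} enter, and verifying that they suffice requires some care with the elementary inequalities relating $\tau$, $\tau^2$ and $\tau^{-4}$.
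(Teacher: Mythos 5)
Your overall architecture is exactly the paper's: the same Lyapunov function $\Phi_k$ (up to the immaterial additive constant $-\nu f_{\min}$), the same decomposition into true/false and successful/unsuccessful iterations, Lemma~\ref{lemma43} and condition~\eqref{nuchoice} on true successful iterations, Lemma~\ref{lemma25} on false iterations, condition~\eqref{betachoice} to combine the two cases, and the telescoping/monotone-convergence conclusion. All of that is sound, and your final summation step is in fact spelled out more carefully than in the paper.

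There is, however, one genuine gap, precisely in the false--successful case that you flag as the main obstacle. In your displayed bound for that case you keep the frame-growth term $(1-\nu)(\tau^{-4}-1)(\Dp)^2\,\ijkc$ and propose to control it only through $\E{\ijkc\mid\mathcal{F}^F_{k-1}}\le 1-\beta$, having discarded the quantity $-\gamma\ef(\Dp)^2$ from the acceptance test (you only use $\Fsk-\Fok<0$). The resulting coefficient of $(\Dp)^2$ is
$-\beta(1-\nu)(1-\tau^2)+2\nu\kappa_F\sqrt{1-\beta}+(1-\nu)(\tau^{-4}-1)(1-\beta)$,
and the hypotheses do not make this negative: \eqref{betachoice} only offsets the middle term, and the third term is not offset by anything. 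Concretely, if $\kappa_F$ is tiny then \eqref{betachoice} is satisfied with $\beta$ barely above $1/2$, and with $\tau=1/2$ the extra term $(1-\nu)(\tau^{-4}-1)(1-\beta)\approx 7.5(1-\nu)$ overwhelms $-\beta(1-\nu)(1-\tau^2)\le 0.75(1-\nu)$ in absolute value, so your one-step estimate fails. The mechanism you invoke (``the growth of $(\Dp)^2$ is offset because true iterations occur with probability at least $\beta$'') is not the right one here. The correct (and local) fix, which is what the paper does, is to retain the full acceptance test: on a false successful iteration,
\begin{equation*}
\nu\bigl(f(X^{k+1})-f(X^k)\bigr)\le \nu\Bigl(-\gamma\ef(\Dp)^2+\abs{f(X^k+S^k)-\Fsk}+\abs{\Fok-f(X^k)}\Bigr),
\end{equation*}
and observe that \eqref{nuchoice} already gives $(1-\nu)(\tau^{-4}-1)\le\tfrac12\nu(\gamma-2)\ef<\nu\gamma\ef$, so the deterministic term $-\nu\gamma\ef(\Dp)^2$ absorbs the frame growth pointwise, leaving only the two error terms to be handled by Lemma~\ref{lemma25}. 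With that single correction your argument closes and coincides with the paper's proof, with $c=\tfrac12\beta(1-\nu)(1-\tau^2)$.
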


\begin{proof}

This theorem is proved, using techniques and ideas derived from~\cite{chen2018stochastic,LaBi2016,paquette2018stochastic} and making use of  properties of the following random function
\begin{equation}\label{randfunct}
\Phi_k=\nu(\fd(X^k)-f_{\min})+(1-\nu)(\Dp)^2,\nonumber
\end{equation}
a similar of which is used in~\cite{chen2018stochastic,LaBi2016}, where $\nu\in(0,1)$ is a fixed constant specified below. 
Recall that $\Dm=\min\{\Dp, (\Dp)^2\}$ {\blc and note that $\Phi_k\in\lun
$ according to Proposition~\ref{fIntegr}, which implies that the conditional expectation $\E{\Phi_{k+1}-\Phi_k|\mathcal{F}^F_{k-1}}$ is well defined for all $k$.}

The overall goal is to show that there exists a constant $\eta>0$ such that for all $k$, 
\begin{equation}\label{expect}
\E{\Phi_{k+1}-\Phi_k\ |\ \mathcal{F}^F_{k-1}}\leq -\eta(\Dp)^2<0.
\end{equation}
Indeed, assume~\eqref{expect} holds on every iteration. Since $f$ is bounded from below by $f_{\min}$ and $\Dp$ is positive, then $\Phi_k$ is bounded from below for all $k$. Hence, 
summing over $k\in\N$ and taking expectations on both sides of~\eqref{expect}, 
lead to the conclusion that~\eqref{almostsure} holds with probability~$1$. Thus, to prove the theorem, {\blc it is needed} to prove that on each iteration~\eqref{expect} holds. 

The proof of this theorem considers two separate cases: good estimates and bad estimates, each of which will be broken into whether an  iteration is successful, an unsuccessful iteration is certain or uncertain. For the sake of clarity of the analysis, let introduce the following events as suggested in~\cite{paquette2018stochastic}:\\
$S\ \ \ \! :=\{\text{The iteration is successful}\}$, $\quad\quad\quad\quad\quad \bar{S}\ \ \ \! :=\{\text{The iteration is unsuccessful}\}$,\\
${\bar{S}}^C:=\{\text{The unsuccessful iteration is certain}\}$, $\quad {\bar{S}}^{\bar{C}}:=\{\text{The unsuccessful iteration is uncertain}\}.$\\
\textbf{Case 1 (Good estimates, $\ijk = 1$).}
{\blc It will be shown} that $\Phi_k$ decreases no matter what type of iteration occurs and that the smallest decrease happens on the uncertain unsuccessful iteration. Thus, this case dominates the other two {\blc thus leading} overall {\blc to the conclusion that}
\begin{equation}\label{concl1}
\E{\ijk(\Phi_{k+1}-\Phi_k)\ |\ \mathcal{F}^F_{k-1}}\leq -\beta(1-\nu)(1-\tau^2)(\Dp)^2.
\end{equation}
\begin{itemize}
\item[(i)] \textit{Successful iteration} $(\isucc=1)$. The iteration is successful and estimates are good so a decrease in the objective $f$ occurs, specifically, lemma~\ref{lemma43} applies:
\begin{equation}
\ijk\isucc\ \nu(\fd(X^{k+1})-\fd(X^k))\leq -\ijk\isucc\nu(\gamma-2)\ef(\Dp)^2\label{aa1}
\end{equation}
As the iteration is successful, {\blc $\Delta^{k+1}_p= \min\{\tau^{-2}\Dp,\dmax\}$.} Consequently, 
\begin{equation}\label{aa2}
\ijk\isucc(1-\nu)\left[(\Delta^{k+1}_p)^2-(\Dp)^2\right]{\blc \leq}\ijk\isucc(1-\nu)(\tau^{-4}-1)(\Dp)^2.
\end{equation}
{\blc $\nu$ is chosen}  large enough so that the right-hand side term  of~\eqref{aa1} dominates that of~\eqref{aa2}, i.e, 
\begin{equation}
-\nu(\gamma-2)\ef(\Dp)^2+(1-\nu)(\tau^{-4}-1)(\Dp)^2 \leq -\frac{1}{2}\nu(\gamma-2)\ef(\Dp)^2,\label{A1}
\end{equation}
which is equivalent to equation~\eqref{nuchoice}. Then, the combination of~\eqref{aa1} and \eqref{aa2} leads to
\begin{eqnarray}\label{A3}
\ijk\isucc (\Phi_{k+1}-\Phi_k)\leq -\ijk\isucc \frac{1}{2}\nu(\gamma-2)\ef(\Dp)^2.
\end{eqnarray}
\item[(ii)] \textit{Certain unsuccessful iteration} $(\iunsuc=1)$. The iteration is unsuccessful, so there is a change of $0$ in the function values while $\Dp$ decreases. Hence,
\begin{eqnarray}
\ijk\isbar\iunsuc (\Phi_{k+1}-\Phi_k)&\leq& - \ijk\isbar\iunsuc(1-\nu)(1-\tau^4)(\Dp)^2\label{A5}
\end{eqnarray}
 
\item[(iii)] \textit{Uncertain unsuccessful iteration} $(\iunc=1)$. {\blc It is} easy to notice that the behavior of Algorithm~\ref{algomads} at {\it uncertain unsuccessful iteration} is obtained from that at {\it certain unsuccessful iteration} simply  by replacing $\tau^2$ by $\tau$. Thus, the bound in the change of $\Phi_k$ follows straightforwardly from~\eqref{A5} by replacing $\iunsuc$ by $\iunc$ and $\tau^4$ by $\tau^2$ as follows
\begin{eqnarray}\label{A8}
\ijk\isbar\iunc (\Phi_{k+1}-\Phi_k)\leq - \ijk\isbar\iunc(1-\nu)(1-\tau^2)(\Dp)^2.
\end{eqnarray}
{\blc $\nu$ is chosen} large enough so that uncertain unsuccessful iterations, specifically~\eqref{A8}, provide the worst case decrease when compared to~\eqref{A3} and~\eqref{A5}. More precisely, {\blc $\nu$ is chosen} according to 
\begin{eqnarray}
-\frac{1}{2}\nu(\gamma-2)\ef(\Dp)^2&\leq& -(1-\nu)(1-\tau^4)(\Dp)^2 \leq  -(1-\nu)(1-\tau^2)(\Dp)^2,\label{A6}
\end{eqnarray}
but using inequalities $1-\tau^2<1-\tau^4 < \tau^{-4}-1$, {\blc it can be noticed} that~\eqref{A6} is satisfied whenever $\nu$ is chosen according to~\eqref{A1}.

Thus, in the case of accurate estimates, using~\eqref{A3}, \eqref{A5}, \eqref{A8} and~\eqref{A6}, the change in $\Phi_k$ {\blc is bounded} by
\begin{eqnarray}\label{A9}
\ijk(\Phi_{k+1}-\Phi_k) &=& \ijk (\isucc+\isbar\iunsuc+\isbar\iunc)(\Phi_{k+1}-\Phi_k)\nonumber\\
&\leq& - \ijk(1-\nu)(1-\tau^2)(\Dp)^2.
\end{eqnarray}
Taking conditional expectations with respect to $\mathcal{F}^F_{k-1}$ in both sides of~\eqref{A9} and using assumption~\ref{assump24}, lead to~\eqref{concl1}.
\end{itemize}
\textbf{Case 2 (Bad estimates, $\ijkc = 1$).} Because of bad estimates, the algorithm can accept an iterate which leads to an increase in $f$ and $\Dp$, and hence in $\Phi_k$. To control this increase in $\Phi_k$, the variance in the function estimates {\blc is bounded making use of}~\eqref{varcond1}. Then, the probability of outcome (Case 2) {\blc is adjusted} to be sufficiently small in order to ensure that in expectation, $\Phi_k$ is sufficiently reduced. More precisely, {\blc it will be proved} that 
\begin{equation}\label{concl2}
\E{\ijkc(\Phi_{k+1}-\Phi_k)\ |\ \mathcal{F}^F_{k-1}}\leq 2\nu(1-\beta)^{1/2}\kappa_F(\Dp)^2.
\end{equation}
Whenever bad estimates occur, a successful iteration leads to the following bound
\begin{eqnarray}
\ijkc\isucc\ \nu(\fd(X^{k+1})-\fd(X^k)) &\leq& \ijkc\isucc\ \nu\left[(\Fsk -\Fok)+\abs{\fd(X^{k+1})-\Fsk}+\abs{\Fok-\fd(X^{k})}\right]\nonumber \\
&\leq& \ijkc\isucc\ \nu\left[-\gamma\ef(\Dp)^2+\abs{\fd(X^{k+1})-\Fsk}+\abs{\Fok-\fd(X^{k})}\right]\ \ \ \label{427}
\end{eqnarray}
where the last inequality is due to the decrease condition $\Fsk -\Fok\leq -\gamma\ef(\Dp)^2$ which holds at every successful iterations. As before, let consider three separate cases.
\begin{itemize}
\item[(i)] \textit{Successful iteration} $(\isucc=1)$. Since the iteration is successful, 
then as in Case~1, {\blc $\Delta^{k+1}_p= \min\{\tau^{-2}\Dp,\dmax\}$.}  Therefore,
\begin{equation}\label{aa22}
\ijkc\isucc(1-\nu)\left[(\Delta^{k+1}_p)^2-(\Dp)^2\right]{\leq}\ijkc\isucc(1-\nu)(\tau^{-4}-1)(\Dp)^2.
\end{equation}
By noticing that choosing $\nu$ according to~\eqref{A1} implies 
\begin{eqnarray}
-\nu \gamma\ef(\Dp)^2+(1-\nu)(\tau^{-4}-1)(\Dp)^2\leq 0,
\end{eqnarray}
then, combining~\eqref{427} and \eqref{aa22} leads to
\begin{eqnarray}\label{A10}
\ijkc\isucc (\Phi_{k+1}-\Phi_k)\leq \ijkc\isucc (\nu\abs{\fd(X^{k+1})-\Fsk}+\nu\abs{\Fok-\fd(X^{k})})
\end{eqnarray}
\item[(ii)] \textit{Certain unsuccessful iteration} $(\iunsuc=1)$. Since $\Dp$ is decreased and the change in function values is $0$, then the bound in the change of $\Phi_k$ follows straightforwardly from that obtained in~\eqref{A5} by replacing $\ijk$ by $\ijkc$. Specifically, 
\begin{eqnarray}
\ijkc\isbar\iunsuc (\Phi_{k+1}-\Phi_k)&\leq& - \ijkc\isbar\iunsuc(1-\nu)(1-\tau^4)(\Dp)^2\nonumber\\
&\leq&-\ijkc\isbar\iunsuc(1-\nu)(1-\tau^2)(\Dp)^2\label{A12}
\end{eqnarray}
\item[(iv)] \textit{Uncertain unsuccessful iteration} $(\iunc=1)$.
Here again, the bound in the change of $\Phi_k$ {\blc is derived from} that obtained in~\eqref{A8}, simply by replacing $\ijk$ by $\ijkc$. Specifically, 
\begin{eqnarray}\label{A13}
\ijkc\isbar\iunc (\Phi_{k+1}-\Phi_k)\leq - \ijkc\isbar\iunc(1-\nu)(1-\tau^2)(\Dp)^2.
\end{eqnarray}
By noticing that ${\bar{S}}^{\bar{C}}\cup {\bar{S}}^C = \bar{S}$, then combining~\eqref{A12} and~\eqref{A13} leads to
\begin{eqnarray}\label{A15}
\ijkc \isbar (\Phi_{k+1}-\Phi_k)\leq - \ijkc \isbar(1-\nu)(1-\tau^2)(\Dp)^2.
\end{eqnarray}
Finally, since~\eqref{A10} dominates~\eqref{A15}, then in all three cases, 
\begin{eqnarray}\label{A16}
\ijkc (\Phi_{k+1}-\Phi_k)\leq \ijkc(\nu\abs{\fd(X^{k+1})-\Fsk}+\nu\abs{\Fok-\fd(X^{k})}).
\end{eqnarray}
Taking expectation of~\eqref{A16} and applying lemma~\ref{lemma25} leads to~\eqref{concl2}.
\end{itemize}
Now, combining expectations~\eqref{concl1} and~\eqref{concl2} leads to
\begin{eqnarray}
\E{\Phi_{k+1}-\Phi_k\ |\ \mathcal{F}^{F}_{k-1}}&=& \E{(\ijk+\ijkc)(\Phi_{k+1}-\Phi_k)\ |\ \mathcal{F}^{F}_{k-1}}\nonumber \\
&\leq& -\beta(1-\nu)(1-\tau^2)(\Dp)^2+ 2 \nu (1-\beta)^{1/2}\kappa_F(\Dp)^2 \nonumber\\
&\leq& \left[-\beta(1-\nu)(1-\tau^2)+ 2 \nu\kappa_F (1-\beta)^{1/2}\right](\Dp)^2.\label{C1}
\end{eqnarray}
Then, choosing $\beta$ in $(1/2,1)$ according to~\eqref{betachoice} ensures that
\begin{equation}\label{C2}
-\beta(1-\nu)(1-\tau^2)+ 2 \nu\kappa_F (1-\beta)^{1/2}\leq -\frac{1}{2}\beta(1-\nu)(1-\tau^2).
\end{equation}
Hence, equation~\eqref{expect} follows from~\eqref{C1} and~\eqref{C2} with $\eta=\frac{1}{2}\beta(1-\nu)(1-\tau^2) > 0$, and the proof {\blc follows by} noticing that $\Dm = \min\{\Dp, (\Dp)^2\}$.
\end{proof}

The following result shows that with probability one, all realizations of random iterates $X^k$ generated by StoMADS lie on meshes getting infinitely fine.
\begin{corollary}\label{meshgetsfine}
Let the same assumptions that were made in Theorem~\ref{zerothOrder} hold. Then, almost surely, 
\begin{equation}\label{convmesh}
\underset{k\to+\infty}{\lim}\Dm=0.
\end{equation}
\end{corollary}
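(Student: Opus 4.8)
The plan is to deduce this directly from Theorem~\ref{zerothOrder}, which already establishes the much stronger statement that $\sum_{k=0}^{+\infty}\Dm<+\infty$ almost surely. The corollary then follows from the elementary fact that the general term of a convergent series must tend to zero.

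Concretely, first I would fix an outcome $\omega$ in the almost-sure event on which $\sum_{k=0}^{+\infty}\Dm(\omega)<+\infty$; by Theorem~\ref{zerothOrder} this event has probability one. Next I would observe that each term of this series is nonnegative: indeed $\Dm=\min\{\Dp,(\Dp)^2\}$ and the frame size parameter $\Dp$ is positive by construction (it is obtained from $\delta^0_p=1$ through multiplications by positive powers of $\tau$ and the cap $\dmax$), so $\Dm(\omega)\geq 0$ for every $k$. Since a series of nonnegative reals converges only if its general term converges to $0$, we get $\lim_{k\to+\infty}\Dm(\omega)=0$. As this holds for every $\omega$ in an event of probability one, \eqref{convmesh} follows.

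I do not anticipate any real obstacle here: the content of the corollary is entirely carried by Theorem~\ref{zerothOrder}, and the remaining argument is the standard necessary condition for convergence of a series, applied pathwise. The only point worth stating explicitly is the nonnegativity (indeed positivity) of $\Dm$, which guarantees that summability forces the terms to zero rather than merely to oscillate.
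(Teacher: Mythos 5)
Your proposal is correct and follows exactly the paper's own argument: invoke Theorem~\ref{zerothOrder} to get almost sure summability of $\{\Dm\}$ and conclude pathwise that the general term of the convergent series tends to zero. Your explicit remark on the positivity of $\Dm$ is a detail the paper leaves implicit, but it does not change the route.
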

\begin{proof}
It follows from Theorem~\ref{zerothOrder} that $\sum_{k=0}^{+\infty} \Dm<+\infty$ almost surely. As a consequence, the sequence $\{\Dm\}_{k\in\N}$ of mesh size parameters converges to zero almost surely.
\end{proof}

\begin{remark}\label{limConv}
{\blc Let} emphasize that this latter result~\eqref{convmesh} {\blc is stronger} than the one obtained in the deterministic framework of the MADS algorithm where it has been proved that $\ {\liminf}_{k\to+\infty}\dm=0$. Indeed, unlike the deterministic framework of the MADS algorithm where available outputs of the objective function $f$ are directly compared in order to ensure improvement, such a behavior of the random sequence of mesh size parameters in the present stochastic framework is due to the use of a sufficient decrease condition in the definition of {\blc iteration types} (see Proposition~\ref{decrease1} and  Definition~\ref{decreasedef}). {\blc Note that a similar remark about the convergence to zero of a whole sequence of step size parameters is made in~\cite{CoScVibook} when a sufficient decrease condition had been imposed in the analyzed  ``Directional direct-search method''}.
\end{remark}
 
\begin{remark}\label{maxaccuracy}
Since the sequence $\{\Dp\}_{k\in\N}$ converges to zero almost surely according to Theorem~\ref{zerothOrder}, then both conditions of Assumption~\ref{assump24} (ii), 
that adaptively control  the variance in function estimates, drive the variance to zero, thus allowing Algorithm~\ref{algomads} to reach a desired accuracy where function estimates are representatives of their corresponding true function values.
\end{remark}

Next, in order to show the existence of convergent subsequences of StoMADS iterates, let introduce the following definition which is similar to that in~\cite{AuHa2017}. 

\begin{definition}\label{refiningsub}
A convergent subsequence $\{x^k\}_{k\in K}$ of the \emph{StoMADS} iterates 
(for some subset of indices $K$), is said to be a \emph{refining subsequence}, if and only if $\{\dm\}_{k\in K}$ converges to zero. The limit $\hat{x}$ of $\{x^k\}_{k\in K}$ is called a \emph{refined point}.
\end{definition}

The existence of convergent refining subsequences was proved by Audet and Dennis in the deterministic framework of the Generalized Pattern Search (GPS)~\cite{AuDe03a} algorithm under assumptions including that according to which all the iterates generated by GPS lie in a compact set. These authors then generalized the proof to the framework of the MADS algorithm in~\cite{AuDe2006}, but with the latter assumption replaced by that according to which all the iterates produced by MADS belong to the level set $\mathscr{L}(f(x^0)):=\{x\in\rn:f(x)\leq f(x^0)\}$ supposed to be bounded. For both algorithms, the refining subsequences was shown to be subsequences of mesh local optimizers on meshes getting infinitely fine. However, note that while in a deterministic framework, the objective values $f(x)$ can never increase from one iteration to another, the challenge as well of the analysis of StoMADS in the present stochastic framework as in those of related works~\cite{blanchet2016convergence,chen2018stochastic,paquette2018stochastic,wang2019stochastic} lies in the fact that the iterates produced can lie outside the initial level set $\mathscr{L}(f(x^0))$ since the objective values $f(x)$ can possibly increase easily between successive iterations. In other words, StoMADS ``{\it can venture outside the initial level set}''~\cite{chen2018stochastic}.  Thus, motivated by these latter remarks, the following theorem is proved under Assumption~\ref{assumpOniterates}, i.e, the same that was used in~\cite{AuDe03a}, in order to make the analysis simpler.

\begin{theorem}\label{refiningExist}
Let the assumptions that were made in Theorem~\ref{zerothOrder} and Assumption~\ref{assumpOniterates} hold. 
Then, there exists at least one almost surely convergent refining subsequence $\{X^k\}_{k\in K}$.
\end{theorem}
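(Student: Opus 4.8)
The plan is to combine the almost-sure convergence of the mesh size parameters, already established in Corollary~\ref{meshgetsfine} as a consequence of Theorem~\ref{zerothOrder}, with the compactness hypothesis of Assumption~\ref{assumpOniterates}, and then to extract a convergent subsequence realization by realization using the Bolzano--Weierstrass theorem. The point is that once the whole sequence of mesh size parameters is known to go to zero, \emph{any} convergent subsequence of iterates is automatically refining, so all that remains is to produce one convergent subsequence, which compactness guarantees.

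Concretely, I would first invoke Corollary~\ref{meshgetsfine} to fix an event $E\in\mathcal{G}$ with $\pr{E}=1$ on which $\lim_{k\to+\infty}\Dm=0$. Then I would fix an outcome $\omega\in E$ and look at the realization $\{x^k\}_{k\in\N}=\{X^k(\omega)\}_{k\in\N}$: by Assumption~\ref{assumpOniterates} this sequence lies in the compact set $\mathcal{X}\subset\rn$, so it has a convergent subsequence $\{x^k\}_{k\in K}$ with limit $\hat{x}\in\mathcal{X}$. Since $\{\dm\}_{k\in\N}$ converges to zero, so does the subsequence $\{\dm\}_{k\in K}$, hence $\{x^k\}_{k\in K}$ is a refining subsequence in the sense of Definition~\ref{refiningsub}, with refined point $\hat{x}$. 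As this works for every $\omega\in E$, a convergent refining subsequence exists almost surely.

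The only delicate point, and the main obstacle, is measurability: to later treat $\{X^k\}_{k\in K}$ as a stochastic process and $\hat{X}$ as a genuine random variable, the index set $K$ should be selected in an $\omega$-measurable way rather than by an unqualified appeal to Bolzano--Weierstrass. I would resolve this with a standard diagonal/covering construction: for each $m\in\N$, cover $\mathcal{X}$ by finitely many closed balls of radius $1/m$; at stage $m$ keep (using a fixed tie-breaking rule) the iterates lying in the first ball of the cover that contains infinitely many of the iterates retained so far; the resulting diagonal subsequence converges, and since every step is a countable selection among the measurable events ``infinitely many iterates lie in ball $B$'', both $K$ and $\hat{X}=\lim_{k\in K}X^k$ are measurable. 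Alternatively, as in the related works cited in the paper, one may simply fix the realization $\omega$ for the remainder of the convergence analysis, in which case no measurable selection is needed and the short argument of the preceding paragraph suffices.
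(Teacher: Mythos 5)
Your argument is essentially identical to the paper's proof: both fix an outcome in the almost-sure event $\{\lim_{k\to+\infty}\Delta^k_m=0\}$ from Corollary~\ref{meshgetsfine}, extract a convergent subsequence of the realized iterates via compactness of $\mathcal{X}$, and observe that any such subsequence is automatically refining. Your closing remark on the $\omega$-dependence of the index set $K$ and the measurability of $\hat{X}$ is in fact more careful than the paper, which silently treats $K$ as if it were fixed; either of your two resolutions is acceptable.
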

\begin{proof}
The proof uses ideas derived from~\cite{AuDe03a}. The result is proved by making use of the event $V=\{\omega\in\Omega: \lim_{k\to+\infty}\Dm(\omega)=0 \}$ that is almost sure thanks to Corollary~\ref{meshgetsfine}.

For all $\omega\in V$, $\{X^k(\omega)\}_{k\in\N}$ is a sequence of iterates on meshes getting infinitely fine. It therefore follows from the compactness hypothesis of Assumption~\ref{assumpOniterates} that there exists a subset of indices $K\subset \N$ for which the subsequence $\{X^k(\omega)\}_{k\in K}$ converges. Denote by $\hat{X}(\omega)$ the limit of $\{X^k(\omega)\}_{k\in K}$.
The proof {\blc follows by} noticing that $V\subseteq \{\omega\in\Omega: {\lim}_{k\in K}X^k(\omega)=\hat{X}(\omega)\}$.
\end{proof}

\subsection{{\blr Nonsmooth optimality conditions}}
The main goal of this subsection is to show with probability one {\blu that}, any refined point $\hat{X}$ derived in Theorem~\ref{refiningExist} satisfies a stochastic variant of the first-order necessary optimality condition via the Clarke derivative stated as Theorem 6.9 in~\cite{AuHa2017}. 

One of the most important requirements on which the Clarke optimality result relies is that the search directions $d^k$ should be chosen in such a way that the sequence $\{\dm\norminf{d^k}\}_{k\in\N}$ converges to zero while $\{\dpl\norminf{d^k}\}_{k\in\N}$ does not, even though both sequences of mesh and frame size parameters converge to zero. Thus, in order for such expectations to be met, {\blc the analysis in this subsection assumes that the columns of the matrix $\mathbf{D}$ used in the definition of the mesh $\mathcal{M}^k$ are the {\blr $2n$} positive and negative coordinate directions, the initial frame size parameter $\delta_p^0$ equals~$1$, the {\blr mesh refining} parameter $\tau$ equals~$1/2$ and moreover, all search directions used in Algorithm~\ref{algomads} during the POLL step are  generated by Algorithm~\ref{polldir} taken from~\cite{AuHa2017}. Note that under these previous assumptions, the sequence $\{\dm\norminf{d^k} \}_{k\in\N}$ is shown in~\cite{AuHa2017} to converge to zero. However, $\dpl\norminf{d^k}\geq 1$ for large values of $k$. Indeed, consider $d^k=\text{round}\left(\frac{\dpl}{\dm}\frac{h}{\norminf{h}}\right)$, where $h=(h^1,h^2,\dots,h^n)^{\top}$ is a column of the Householder matrix $\mathbf{H}^k$, an index $j$ such that $\abs{h^j}=\norminf{h}$ and $k_0$ such that $\dpl\leq 1$ for all $k \geq k_0$. Then, for all $k \geq k_0$, $\dpl\norminf{d^k}\geq 1$ since $1/\dpl$ is an integer and \[\dpl\ \text{round}\left(\abs{\frac{\dpl}{\dm}\frac{h^j}{\norminf{h}}} \right)=\dpl\ \text{round}\left(\frac{1}{\dpl} \right)=1.\]}
Note also that in Algorithm~\ref{polldir}, the Householder matrix is denoted by $\mathbf{H}^k$ instead of $H^k$~\cite{AuHa2017} so that it is not considered as a random matrix.

\begin{algorithm}[H] 
	\caption{Creating the set $\mathbb{D}^k_p$ of poll directions}
	\label{polldir}
	Given $v^k\in\rn$ with $\norme{v^k}=1$ and $\dpl\geq\dm >0$\\
	\textbf{[1] Create Householder matrix}\\
	\hspace*{10mm}Use $v^k$ to create its associated Householder matrix $\mathbf{H}^k=I-2v^k{v^k}^\top\in\rnn$ \\
	\hspace*{10mm}and let $\mathbf{H}^k=[h_1\ h_2\dots h_n]$ \\
	\textbf{[2] Create poll set}\\
	\hspace*{10mm}Define $\mathbb{B}^k=\{b_1,b_2,\dots,b_n\}$ with $b_j=\text{round}\left(\frac{\dpl}{\dm}\frac{h_j}{\norminf{h_j}}\right)\in \mathbb{Z}^n$\\
	\hspace*{10mm}set $\mathbb{D}^k_p=\mathbb{B}^k\cup (-\mathbb{B}^k)$
\end{algorithm}


The following auxiliary result~\cite{ BaScVi2014,chen2018stochastic} taken from martingale literature~\cite{durrett2010probability} will be useful later in the analysis.

\begin{theorem}\label{theorem44}
Let $\{G_k\}_{k\in\N}$ be a submartingale, i.e, a sequence of random variables which, for every $k\in\N$, satisfy 
\[\E{G_k|\mathcal{F}^G_{k-1}}\geq G_{k-1},\] 
where $\mathcal{F}^G_{k-1}=\sigma(G_0,G_1,\dots,G_{k-1})$ is the $\sigma$-algebra generated by $G_0,G_1,\dots,G_{k-1}$, and $\Esp(G_k|\mathcal{F}^G_{k-1})$ denotes the conditional expectation of $G_k$, given the past history of events $\mathcal{F}^G_{k-1}$.
	
Assume further that $G_k-G_{k-1}\leq M < +\infty$, for every $k$. Then, 
\begin{equation}\label{submartingale}
\pr{\left\lbrace \underset{k\to\infty}{\lim} G_k < \infty \right\rbrace \cup \left\lbrace \underset{k\to\infty}{\limsup}\ G_k = \infty \right\rbrace} = 1.\nonumber
\end{equation}
\end{theorem}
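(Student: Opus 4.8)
The plan is to deduce the theorem from the classical submartingale convergence theorem by a level-truncation (optional-stopping) argument, the point being that the one-sided increment bound $G_k - G_{k-1}\le M$ prevents a stopped copy of $\{G_k\}$ from overshooting a prescribed level by more than $M$, so that each truncated process is a submartingale which is bounded above and hence convergent almost surely.

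First I would fix an integer $a\ge 1$ and introduce the stopping time $T_a:=\inf\{k\in\mathbb{N}:G_k>a\}$ (with $\inf\emptyset=+\infty$), which is a stopping time for the natural filtration $\{\mathcal{F}^G_k\}$ since $\{T_a\le k\}=\bigcup_{j=0}^{k}\{G_j>a\}\in\mathcal{F}^G_k$. By the optional stopping theorem for submartingales, the stopped process $G^a_k:=G_{\min\{k,T_a\}}$ is again a submartingale. Next I would bound it from above: if $T_a\ge 1$ then for $k<T_a$ we have $G^a_k=G_k\le a$, and for $k\ge T_a$ we have $G^a_k=G_{T_a}=G_{T_a-1}+(G_{T_a}-G_{T_a-1})\le a+M$, using $G_{T_a-1}\le a$ together with the increment bound; if $T_a=0$ then $G^a_k=G_0$ for all $k$. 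Hence $G^a_k\le\max\{G_0,\,a+M\}$ for every $k$, and since each $G_k$ (in particular $G_0$) is integrable — this being implicit in the hypothesis that $\mathbb{E}[G_k\mid\mathcal{F}^G_{k-1}]$ is well defined — we obtain $\sup_k\mathbb{E}[(G^a_k)^+]<\infty$. The submartingale convergence theorem (see \cite{durrett2010probability}) then provides a random variable $G^a_\infty$ with $\mathbb{E}|G^a_\infty|<\infty$ such that $G^a_k\to G^a_\infty$ almost surely.

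Finally I would assemble the levels. On the event $\{\sup_k G_k\le a\}$ one has $T_a=+\infty$, hence $G^a_k=G_k$ for all $k$, so $G_k$ converges to the finite limit $G^a_\infty$ almost surely on that event. Taking the union over $a\in\mathbb{N}$ shows that $G_k$ converges to a finite limit almost surely on $\bigcup_{a\ge 1}\{\sup_k G_k\le a\}=\{\sup_k G_k<+\infty\}$. On the complementary event $\{\sup_k G_k=+\infty\}$ one trivially has $\limsup_k G_k=+\infty$. Since these two events together exhaust $\Omega$, the event $\{\lim_k G_k<\infty\}\cup\{\limsup_k G_k=\infty\}$ has probability one, which is the assertion.

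The step requiring the most care — and the only place where the hypotheses are genuinely used — is verifying that $G^a_k$ is a submartingale bounded above by an integrable random variable: one must apply optional stopping correctly, control the single overshoot increment $G_{T_a}-G_{T_a-1}$ via $M$, and handle the degenerate case $T_a=0$ using integrability of $G_0$. Once the hypotheses of the martingale convergence theorem are in place for each truncated process, the remaining patching over the countable family of levels is routine.
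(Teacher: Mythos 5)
Your proof is correct. The paper does not prove this statement at all --- it is quoted as a known auxiliary result from the martingale literature (Durrett) --- and the level-truncation argument you give, stopping at the first exceedance of level $a$, using the one-sided increment bound $M$ to cap the overshoot so that the stopped process satisfies $\sup_k \E{(G^a_k)^+}<\infty$, applying the submartingale convergence theorem, and then patching over the countable family of levels, is precisely the standard textbook proof of this result, with the delicate points (optional stopping, the $T_a=0$ case, integrability of $G_0$) handled correctly.
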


The properties of the random function ${\Psi}_k$ introduced next will be useful for the proof of the optimality result via the Clarke derivative in Theorem~\ref{clarkeOrder}.

\begin{theorem}\label{psi}
Let the same assumptions that were made in Theorem~\ref{zerothOrder} hold. Define the random function $\Psi_k$ with realizations $\psi_k$ as follows
\begin{equation}\label{psik1}
\psi_k=\frac{\fd(x^k)-\fd(x^k+\dm d)}{\dpl},\nonumber
\end{equation}
where $d\in\mathbb{D}^k_p$ is any direction used by \emph{StoMADS} and that is generated by Algorithm~\ref{polldir}. Then, almost surely,
\begin{eqnarray}\label{liminfPsik}
\underset{k\to+\infty}{\liminf}\ \Psi_k\leq 0.
\end{eqnarray}
\end{theorem}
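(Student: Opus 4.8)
The plan is to argue by contradiction and lean on the submartingale convergence result of Theorem~\ref{theorem44}. Suppose \eqref{liminfPsik} fails, so that $\pr{\liminf_{k\to+\infty}\Psi_k>0}>0$. Writing this event as the countable union over rationals $\zeta>0$ and integers $m$ of $E_{\zeta,m}:=\{\psi_k\geq\zeta\text{ for all }k\geq m\}$, one can fix $\zeta>0$ and $m$ with $\pr{E_{\zeta,m}}>0$. By Corollary~\ref{meshgetsfine} the event $V=\{\Dm\to0\}$ is almost sure, and since $\Dm=\min\{\Dp,(\Dp)^2\}$ this forces $\Dp<1$ for $k$ large and hence $\Dp\to0$ on $V$. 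Put $W:=E_{\zeta,m}\cap V$, so $\pr{W}=\pr{E_{\zeta,m}}>0$; the goal is to show $W$ contradicts $\Dp\to0$.

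First I would show that, on $W$, every \emph{true} iteration with sufficiently large index is successful. Fix $\omega\in W$ and an index $k_1\geq m$ beyond which $\Dp\leq\min\{\tau^{-\hat z+2},\ \zeta/((\gamma+2)\ef)\}$ (possible since $\Dp\to0$). For $k\geq k_1$ with $\mathds{1}_{J_k}=1$, the $\ef$-accuracy of $\Fok,\Fsk$ together with $\fd(x^k+s^k)-\fd(x^k)=-\psi_k\dpl\leq-\zeta\dpl$ gives
\[
\Fsk-\Fok\ \leq\ -\psi_k\dpl+2\ef(\dpl)^2\ \leq\ -\zeta\dpl+2\ef(\dpl)^2\ \leq\ -\gamma\ef(\dpl)^2,
\]
where $s^k=\dm d^k$ for the poll direction $d^k\in\mathbb{D}^k_p$ featured in $\Psi_k$. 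By Definition~\ref{decreasedef} the iteration is then successful, so $\delta^{k+1}_p=\min\{\tau^{-2}\dpl,\dmax\}=\tau^{-2}\dpl=4\dpl$; on a \emph{false} iteration the update rules only allow $\delta^{k+1}_p\in\{\tau^{-2}\dpl,\tau^2\dpl,\tau\dpl\}$, hence $\delta^{k+1}_p\geq\tfrac14\dpl$. Multiplying these one-step inequalities over $i=k_1,\dots,k$ yields $\delta^{k+1}_p\geq\delta^{k_1}_p\,4^{\,T_k-F_k}$, where $T_k$ and $F_k$ are the numbers of true and false iterations with index in $\{k_1,\dots,k\}$.

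Next I would control $T_k-F_k$ unconditionally. I would set $G_k:=\sum_{i=0}^k(\mathds{1}_{J_i}-\beta)$; by Assumption~\ref{assump24}(i), $\E{\mathds{1}_{J_k}\mid\mathcal{F}^F_{k-1}}\geq\beta$, so $\{G_k\}$ is a submartingale with respect to its natural filtration and has increments $G_k-G_{k-1}=\mathds{1}_{J_k}-\beta\leq1$. Theorem~\ref{theorem44} then gives, almost surely, that $G_k$ converges to a finite limit or $\limsup_k G_k=+\infty$; in either case there are infinitely many $k$ with $\sum_{i=0}^k\mathds{1}_{J_i}\geq\beta(k+1)-C(\omega)$. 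Since $T_k-F_k=2\sum_{i=0}^k\mathds{1}_{J_i}-(k+1)+c(\omega)$ for the finite constant $c(\omega)=k_1-2\sum_{i=0}^{k_1-1}\mathds{1}_{J_i}$, and $\beta>1/2$, along those $k$ one gets $T_k-F_k\geq(2\beta-1)(k+1)-2C(\omega)+c(\omega)\to+\infty$, hence $\delta^{k+1}_p\geq\delta^{k_1}_p\,4^{\,T_k-F_k}\to+\infty$. This contradicts $\Dp\to0$ on $W\subseteq V$, so $\pr{W}=0$ — a contradiction — and \eqref{liminfPsik} holds almost surely.

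I expect the main obstacle to be the interplay between the $\omega$-wise, deterministic bookkeeping on $\Dp$ and the probabilistic accuracy assumption: one cannot condition on the tail event $E_{\zeta,m}$ while invoking Assumption~\ref{assump24}(i), so the crucial counting estimate $\sum_{i\le k}\mathds{1}_{J_i}\gtrsim\beta k$ has to be obtained globally from the bounded-increment submartingale $G_k$ via Theorem~\ref{theorem44}, and only afterwards married to the path-by-path reasoning valid on $W$; getting the two pieces to meet — and in particular handling the ``$\limsup G_k=\infty$'' branch, which only yields the counting bound along a subsequence — is the delicate step. A minor but necessary check is that on a true iteration in the small-$\Dp$ regime the accurate estimates really trip the sufficient decrease test of Definition~\ref{decreasedef} for the particular direction defining $\Psi_k$, and that $\Dp$ has fallen below the cap $\dmax$ so that the success update is exactly multiplication by $\tau^{-2}$.
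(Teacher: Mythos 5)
Your proof is correct and follows essentially the same route as the paper's: a contradiction on a positive-probability tail event, the observation that once $\dpl$ is small enough every true iteration trips the sufficient-decrease test and hence multiplies $\dpl$ by $\tau^{-2}$ while any iteration shrinks it by at most $\tau^{2}$, and a bounded-increment submartingale built from $\ijk$ fed into Theorem~\ref{theorem44} to show that true iterations dominate, forcing $\Dp$ to grow and contradicting $\Dp\to 0$. The only differences are bookkeeping — the paper tracks $R_k=-\tfrac{1}{2}\log_\tau(\dpl/\lambda)$ and bounds it below by the walk $\sum_i(2\,\iji-1)$, whereas you track $\dpl$ directly through $4^{\,T_k-F_k}$, which is the same quantity — and your explicit separation of the global counting estimate from the path-by-path reasoning on $W$ is, if anything, a slightly more careful handling of the conditioning subtlety than the paper's.
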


\begin{proof}
Using ideas in the proof of the liminf-type first-order convergence result in~\cite{chen2018stochastic}, this result is proved by contradiction conditioned on the event $V'=\left\lbrace \lim_{k\to+\infty}\Dp=0 \right\rbrace$ that is almost sure thanks to Corollary~\ref{meshgetsfine}.
All that follows is conditioned on $V'$.
Assume that there exists $\epsilon >0$ such that, with positive probability, 
\begin{equation}\label{D3}
\Psi_k\geq \epsilon (\gamma+2), \quad\text{for all}\ k\in\N, 
\end{equation}
where $\gamma\in(2,+\infty)$ is the same constant in Algorithm~\ref{algomads} and recall that $s^k=\dm d$ for all $k$. Let $\{x^k\}_{k\in\N}$, $\{\dpl\}_{k\in\N}$ and $\{s^k\}_{k\in\N}$ be realizations of $\{X^k\}_{k\in\N}$, $\{\Dp\}_{k\in\N}$ and $\{S^k\}_{k\in\N}$, respectively for which $\psi_k\geq \epsilon (\gamma+2), \ \text{for all}\ k\in\N$. Since ${\lim}_{k\to +\infty}\dpl = 0$ because of the conditioning on $V'$, there exists $k_0\in\N$ such that 
\begin{eqnarray}\label{B11}
\dpl < \lambda:= \min \left\lbrace \frac{\epsilon}{\ef}, \tau^{2-\hat{z}} \right\rbrace,\ {\blc \text{for all  }\ k\geq k_0}.
\end{eqnarray}
Define the random variable $R_k$ with realizations $r_k=-\frac{1}{2}{\log}_\tau \left(\frac{\dpl}{\lambda} \right)$. Then, $r_k < 0$ for all $k\geq k_0$. The main idea of the proof is to show that such realizations occur only with probability zero, hence obtaining a contradiction. In order to first show that $R_k$ is a submartingale, recall the events $J_k$ in the Definition~\ref{probestim2} for some $\ef\in(0,1)$ and consider some iterate $k\geq k_0$ for which $J_k$ occurs, which happens with probability at least $\beta > 1/2$ thanks to Theorem~\ref{zerothOrder}. 
Now, noticing that~\eqref{D3} and~\eqref{B11} imply 
\begin{equation}\label{D4}
\fd(x^k+s^k)-\fd(x^k)\leq -\epsilon(\gamma+2)\dpl\leq -\ef (\gamma+2)(\dpl)^2,\ \text{for all}\ k\geq k_0,\nonumber
\end{equation}
then, for all $k\geq k_0$,
\begin{eqnarray*}
\fsk-\fok&=&[\fd(x^k+s^k)-\fd(x^k)]+[\fd(x^k)-\fok]+[\fsk-\fd(x^k+s^k)] \\
&\leq& -\ef (\gamma+2)(\dpl)^2+2\ef(\dpl)^2 = -\gamma\ef(\dpl)^2.
\end{eqnarray*}
Hence, the $k$-kth iteration of Algorithm~\ref{algomads} is successful, so the frame size parameter $\dpl$ is updated according to $\delta^{k+1}_p=\tau^{-2}\dpl$ since $\dpl<\tau^{2-\hat{z}}$. Consequently, $r_{k+1}=r_k+1$.
	
	
Let $\mathcal{F}^J_{k-1}=\sigma(J_0,J_1,\dots,J_{k-1})$. If $\ijk=0$, which occurs with probability at most $1-\beta$, then the inequality $\delta_p^{k+1}\geq \tau^2 \dpl$ always holds, which implies that $r_{k+1}\geq r_k-1$. Thus, 
\begin{eqnarray}\label{D5}
\E{\ijk(R_{k+1}-R_k)|\mathcal{F}^J_{k-1}}&=&\pr{J_k|\mathcal{F}^J_{k-1}}\geq\beta\nonumber \\
\text{and}\quad \E{\ijkc(R_{k+1}-R_k)|\mathcal{F}^J_{k-1}}&\geq& -\pr{\bar{J_k}|\mathcal{F}^J_{k-1}}\geq \beta-1\nonumber.
\end{eqnarray}
Hence, $\E{R_{k+1}-R_k|\mathcal{F}^J_{k-1}}\geq 2 \beta-1>0$, implying that $R_k$ is a submartingale. 
	
	
Now, construct the following random walk $W_k$ on the same probablity space as $R_k$, which will serve as a lower bound on $R_k$ and for which $\left\lbrace {\limsup}_{k\to+\infty} W_k = +\infty \right\rbrace$ holds almost surely
\begin{eqnarray*}
W_k=\sum_{i=0}^{k}(2 \cdot \iji - 1).
\end{eqnarray*}
From the submartingale-like property enforced in Definition~\ref{probestim2}, it easily follows that $W_k$ is a submartingale. In fact, 
\begin{eqnarray*}
\E{W_k|\mathcal{F}^J_{k-1}} &=& \E{W_{k-1}|\mathcal{F}^J_{k-1}} + \E{2 \cdot \ijk - 1|\mathcal{F}^J_{k-1}}\\
&=& W_{k-1}+2 \E{\ijk |\mathcal{F}^J_{k-1}}- 1\\
&=& W_{k-1}+2 \pr{J_k|\mathcal{F}^J_{k-1}}- 1\\
&\geq& W_{k-1}.
\end{eqnarray*}
Notice that the submartingale $W_k$ has $\pm 1$ and hence, bounded increments, whence cannot have a finite limit. Thus, it follows from Theorem~\ref{theorem44} that the event $\left\lbrace {\limsup}_{k\to+\infty}  W_k = +\infty \right\rbrace$ occurs almost surely. 

Since $R_k$ and $W_k$ are constructed in such a way that 
\begin{equation}
r_k-r_{k_0} = -\frac{1}{2}{\log}_\tau \left(\frac{\dpl}{\delta^{k_0}_p}\right) = k-k_0 \geq w_k-w_{k_0},\nonumber
\end{equation}
with $w_k$ denoting a realization of $W_k$, then with probability one, $R_k$ has to be positive infinitely often. 
Consequently, the sequence of realizations $r_k$ such that $r_k<0$ for all $k\geq k_0$ occurs with probability zero. Thus, the assumption that $\Psi_k\geq \epsilon (\gamma+2) \ \text{holds for all}\ k\in\N$ with positive probability is false and~\eqref{liminfPsik} holds almost surely.

\end{proof}

The following definition of refining directions~\cite{AuDe2006,AuHa2017} will be useful in the analysis.

\begin{definition}\label{refiningDir}
Given a convergent refining subsequence $\{x^k\}_{k\in K}$ and its corresponding refined point $\hat{x}$, a direction $d$ is said to be a \emph{refining direction} if and only if there exists an infinite subset $L\subseteq K$ with poll directions $d^k\in\mathbb{D}^k_p$ such that 
$\underset{k\in L}{\lim}\frac{d^k}{\norminf{d^k}}=\frac{d}{\norminf{d}}$.
\end{definition}
Note that for all realizations of StoMADS, the existence of a refining direction $d$ for a given refining subsequence $\{x^k\}_{k\in K}$ and its corresponding refined point $\hat{x}$ is justified by the compactness of the unit closed ball.

Next is stated a useful {\blc result taken from}~\cite{AuDe2006}, that provides {\blc in particular} a lower bound on the Clarke directional derivative.
\begin{lemma}\label{limsuplemma}
Let $f:\rn\to\R\ $ be {\blr locally} Lipschitz near $\hat{x}\in\rn$. Then the Clarke \emph{generalized directional derivative} of $f$ at $\hat{x}$ in the direction $d\in\rn$ satisfies
\begin{equation}\label{clarke}
f^{\circ}(\hat{x};d):= \underset{t\searrow 0}{\underset{y\to \hat{x}}{\limsup}} \frac{f(y+td)-f(y)}{t}{\blc = }\underset{x\to\hat{x}, v\to d, t\searrow 0}{\limsup} \frac{f(x+tv)-f(x)}{t}.\nonumber
\end{equation}
\end{lemma}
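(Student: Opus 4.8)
The plan is to establish the equality of the two expressions for the Clarke generalized directional derivative

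\[ f^{\circ}(\hat{x};d)=\underset{t\searrow 0}{\underset{y\to \hat{x}}{\limsup}} \frac{f(y+td)-f(y)}{t}\qquad\text{and}\qquad \underset{x\to\hat{x},\, v\to d,\, t\searrow 0}{\limsup} \frac{f(x+tv)-f(x)}{t} \]

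by proving the two inequalities separately. One direction is trivial: the second limsup is taken over a strictly larger index set (it allows $v$ to vary, not just $v=d$), so it is automatically $\geq$ the first. The content is therefore entirely in the reverse inequality: every limit point of $\frac{f(x+tv)-f(x)}{t}$ as $(x,v,t)\to(\hat{x},d,0^+)$ is bounded above by $f^{\circ}(\hat{x};d)$.

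First I would fix a sequence $(x_j,v_j,t_j)\to(\hat{x},d,0^+)$ realizing the second limsup, and write the standard algebraic splitting
\[ \frac{f(x_j+t_jv_j)-f(x_j)}{t_j}=\frac{f(x_j+t_jd)-f(x_j)}{t_j}+\frac{f(x_j+t_jv_j)-f(x_j+t_jd)}{t_j}. \]
The first term has $\limsup$ bounded by $f^{\circ}(\hat{x};d)$ directly from the definition, since $x_j\to\hat{x}$ and $t_j\searrow 0$. For the second term I would invoke the local Lipschitz hypothesis: for $j$ large, all points $x_j+t_jv_j$ and $x_j+t_jd$ lie in a neighborhood of $\hat{x}$ on which $f$ has some Lipschitz constant $L$, so the second term is bounded in absolute value by $L\,\frac{\lVert t_j v_j - t_j d\rVert}{t_j}=L\,\lVert v_j-d\rVert\to 0$. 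Combining, the $\limsup$ of the left-hand side is $\leq f^{\circ}(\hat{x};d)$, which gives the nontrivial inequality.

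The only mild subtlety — and the place to be slightly careful — is making sure the Lipschitz estimate is legitimate: one needs a single neighborhood $U$ of $\hat{x}$ and a single constant $L$ such that $f$ is $L$-Lipschitz on $U$, and then the argument that for large $j$ both arguments $x_j+t_jd$ and $x_j+t_jv_j$ eventually lie in $U$ (using $x_j\to\hat{x}$, $t_j\to 0$, $v_j\to d$ bounded). This is routine once ``locally Lipschitz near $\hat{x}$'' is read as ``Lipschitz on some neighborhood of $\hat{x}$'', so no real obstacle arises; the proof is essentially a two-line estimate plus the triviality of the other direction. I expect the authors simply cite this as a known fact from~\cite{Clar83a}, and the ``proof'' in the paper, if given, will be exactly this splitting argument.
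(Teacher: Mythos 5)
Your proof is correct: the inequality $\geq$ is immediate since the second $\limsup$ ranges over a larger index set, and the reverse inequality follows from your splitting together with the bound $L\lVert v_j-d\rVert\to 0$ on the correction term, which is legitimate because $x_j\to\hat{x}$, $t_j\searrow 0$ and the boundedness of $\{v_j\}$ place all the relevant points in a single neighborhood on which $f$ is $L$-Lipschitz. As you anticipated, the paper offers no proof at all --- it states the lemma as a known fact taken from the MADS literature (ultimately from Clarke) --- so your argument supplies exactly the standard justification that is being implicitly invoked.
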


The following result proved using properties of the random function $\Psi_k$ defined in Theorem~\ref{psi} is a stochastic variant of that in~\cite{AuDe2006}. It states that with probability one, the Clarke generalized derivative of $f$ at a refined point in any corresponding refining direction is nonnegative. It is however worthwhile to mention that while the proof in~\cite{AuDe2006} relies on the fact that the inequality $\fd(x^k+\dm d^k)-\fd(x^k)\geq 0$ always holds on every unsuccessful iterations, the idea of proof in the present analysis is different since some of such unsuccessful iterations can be uncertain, in which case $\fd(x^k+\dm d^k)-\fd(x^k)$ belongs to the uncertainty interval $\mathcal{I}_{\gamma+2,\ef}(\delta^k_p)$. 
\begin{theorem}(Convergence of \emph{StoMADS}).\label{clarkeOrder}
Let the assumptions of Theorem~\ref{refiningExist} hold. Then, there exists an almost sure event $V''$ such that for all $\omega\in V''$, for all refined point $\hat{X}(\omega)\in\rn$ and for all refining directions $D(\omega)\in\rn$ for $\hat{X}(\omega)$, the generalized directional derivative of $f$ at $\hat{X}(\omega)$ in the direction $D(\omega)$ is nonnegative, i.e, 
\begin{eqnarray}
f^{\circ}\left(\hat{X}(\omega);D(\omega)\right)\geq 0.
\end{eqnarray}
 
\end{theorem}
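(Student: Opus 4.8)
The plan is to transport the deterministic MADS Clarke-stationarity argument to the present stochastic setting, conditionally on an almost sure event, replacing the deterministic inequality ``$f(x^k+\dm d^k)\ge f(x^k)$ at unsuccessful iterations'' by the weaker uncertainty-interval estimate that holds at \emph{true} unsuccessful iterations. I would first fix the almost sure event $V''$ on which the conclusions of Corollary~\ref{meshgetsfine} ($\Dp\to 0$, hence $\Dm\to 0$), Theorem~\ref{refiningExist} (existence of a convergent refining subsequence) and Theorem~\ref{psi} hold simultaneously. Fix $\omega\in V''$, a refined point $\hat x:=\hat X(\omega)$ with an associated refining subsequence $\{x^k\}_{k\in K}$, and a refining direction $d:=D(\omega)$, so that by Definition~\ref{refiningDir} there exist $L\subseteq K$ and poll directions $d^k\in\mathbb{D}^k_p$ with $d^k/\norminf{d^k}\to d/\norminf{d}$ and $x^k\to\hat x$ along $L$. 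Since $v\mapsto f^{\circ}(\hat x;v)$ is positively homogeneous, it suffices to prove $f^{\circ}(\hat x;d/\norminf{d})\ge 0$.

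Next I would record the geometry forced by the standing assumptions of this subsection ($\mathbf{D}$ equal to the $2n$ positive and negative coordinate directions, $\delta_p^0=1$, $\tau=1/2$, poll directions generated by Algorithm~\ref{polldir}): for all large $k$ one has $\dpl\le 1$, hence $\dm=(\dpl)^2$; the sequence $\dm\norminf{d^k}\to 0$ (proved in~\cite{AuHa2017}); and, by the rounding computation already displayed in the text, $1/\dpl\in\N$ and $\norminf{d^k}=1/\dpl$, so $\dpl\norminf{d^k}=1$. Putting $t^k:=\dm\norminf{d^k}\searrow 0$ and $v^k:=d^k/\norminf{d^k}\to d/\norminf{d}$ and using $x^k\to\hat x$, Lemma~\ref{limsuplemma} yields
\begin{equation*}
f^{\circ}\!\left(\hat x;\ d/\norminf{d}\right)\ \ge\ \limsup_{k\in L}\ \frac{f(x^k+\dm d^k)-f(x^k)}{\dm\norminf{d^k}}\ =\ \limsup_{k\in L}\ \frac{f(x^k+\dm d^k)-f(x^k)}{\dpl},
\end{equation*}
the last equality holding because $\dm\norminf{d^k}=\dpl\cdot(\dpl\norminf{d^k})=\dpl$ for large $k$; note this last quotient equals $-\psi_k$, the realization of $-\Psi_k$ from Theorem~\ref{psi} for the directions $d^k$. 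Thus it suffices to show $\liminf_{k\in L}\psi_k\le 0$.

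Then I would extract, inside $L$, a sub-subsequence consisting of \emph{true unsuccessful} iterations (still with $d^k/\norminf{d^k}\to d/\norminf{d}$ and $x^k\to\hat x$) and invoke the uncertainty-interval bound there. At such an iteration $k$ the POLL failed to produce sufficient decrease, so $\Fsk-\Fok>-\gamma\ef(\dpl)^2$ for $s^k=\dm d^k$; since the iteration is true ($\ijk=1$, i.e.\ the estimates are $\ef$-accurate), Definition~\ref{probestim1} gives $f(x^k+\dm d^k)-f(x^k)>-(\gamma+2)\ef(\dpl)^2$, i.e.\ $f(x^k+\dm d^k)-f(x^k)\in\mathcal{I}_{\gamma+2,\ef}(\dpl)$, exactly as announced in the remark preceding the theorem. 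Hence $\psi_k<(\gamma+2)\ef\,\dpl\to 0$ along this sub-subsequence, so $\liminf_{k\in L}\psi_k\le 0$, and consequently $f^{\circ}(\hat X(\omega);D(\omega))\ge 0$.

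The genuinely delicate point is the extraction just used: one must guarantee that the \emph{same} (random) subsequence $L$ realizing the refining direction $d$ carries infinitely many iterations that are simultaneously unsuccessful and equipped with good estimates. The ``unsuccessful'' part is the standard MADS reduction — refining subsequences may be taken to consist of minimal-frame centers, and $\Dp\to 0$ forces infinitely many of those — but ``good estimates infinitely often along a prescribed random index set'' is the stochastic crux: a pigeonhole argument does not suffice, and one must reason with the filtration $\{\mathcal{F}^F_{k-1}\}$, for instance through a conditional (L\'evy-type) Borel--Cantelli lemma based on $\pr{J_k\mid\mathcal{F}^F_{k-1}}\ge\beta$ (Assumption~\ref{assump24}(i)), or by re-running the submartingale/random-walk comparison from the proof of Theorem~\ref{psi} together with Theorem~\ref{theorem44}. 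Everything else — Lemma~\ref{limsuplemma}, the identity $\dpl\norminf{d^k}=1$ for large $k$, and the one-line manipulation of the uncertainty-interval inequality — is routine.
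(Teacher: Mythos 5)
Your local ingredients are all correct and coincide with the paper's: the identity $\dpl\norminf{d^k}=1$ for large $k$, the application of Lemma~\ref{limsuplemma} with $x^k\to\hat{x}$, $d^k/\norminf{d^k}\to d/\norminf{d}$ and $t^k=\dm\norminf{d^k}\searrow 0$, and the observation that at a \emph{true unsuccessful} iteration the uncertainty interval yields $\psi_k<(\gamma+2)\ef\dpl\to 0$. The gap is exactly the one you flag, and it is genuine rather than routine. Because you fix the refining subsequence $L$ (hence the refining direction) \emph{first}, you must then produce infinitely many indices in $L$ that are simultaneously unsuccessful and true. Neither of your suggested repairs closes this. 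A conditional (L\'evy-type) Borel--Cantelli argument from $\pr{J_k\mid\mathcal{F}^F_{k-1}}\geq\beta$ controls occurrences of $J_k$ only along index sets that are adapted to the filtration, whereas membership of $k$ in $L$ is determined by the convergence of the whole trajectory and is not $\mathcal{F}^F_{k-1}$-measurable. Moreover, nothing in Definition~\ref{refiningDir} forces $L$ to contain infinitely many unsuccessful iterations: in this paper refining subsequences are explicitly \emph{not} required to consist of minimal frame centers, and at successful iterations $\psi_k$ admits a positive lower bound but no useful upper bound, so a subsequence $L$ made of successful iterations would defeat your extraction.

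The paper circumvents this by reversing the order of extraction. Theorem~\ref{psi} runs the submartingale/random-walk comparison over the \emph{entire} iteration sequence and delivers an almost sure event on which there is a (random) index set $K'(\omega)$ along which $\psi_k$ has a nonpositive limit. The proof of Theorem~\ref{clarkeOrder} then extracts the convergent subsequence of iterates and the convergent subsequence of normalized poll directions \emph{inside} $K'(\omega)$, so that the refined point and the refining direction are produced by, rather than prescribed to, the subsequence on which $\psi_k$ is asymptotically nonpositive. (Your architecture is the one the literal ``for all refined points and all refining directions'' phrasing calls for, but making it work would require strengthening Theorem~\ref{psi} to hold along arbitrary prescribed subsequences, which is precisely the step you have not supplied.) To repair your write-up with minimal change, invoke Theorem~\ref{psi} first, take $L$ inside the subsequence it provides, and only then identify $\hat{x}$ and $d$ as the limits of $x^k$ and $d^k/\norminf{d^k}$ along $L$; the remainder of your argument then goes through verbatim.
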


\begin{proof}
It follows from Corollary~\ref{meshgetsfine} and Theorem~\ref{psi} that the event \[V'':=\left\lbrace \omega\in\Omega: \lim_{k\to+\infty}\Dm(\omega)=0 \right\rbrace \bigcap \left\lbrace \omega\in\Omega:\exists K'(\omega)\subset\N, \lim_{k\in K'(\omega)}\Psi_k(\omega)\leq 0 \right\rbrace \]
is almost sure as countable intersection of almost sure events. Consider some arbitrary outcome $\omega\in V''$. Denote $\tilde{K}=K'(\omega)$ and recall that $\dm=\Dm(\omega)$, $\dpl=\Dp(\omega)$  and $\psi_k=\Psi_k(\omega)$. Since $\lim_{k\in \tilde{K}}\dm=0$, then using arguments as in the proof of Theorem~\ref{refiningExist}, there exists a subset $K\subset\tilde{K}$ such that $\lim_{k\in K}x^k=\hat{x}$. It then follows from the compactness of the closed unit ball of $\rn$ that there exists a subset $L\subset K$ such that the normalized subsequence $\left\lbrace d^k/\norminf{d^k} \right\rbrace_{k\in L}$ of POLL directions used by StoMADS converges to a limit $d/\norminf{d}=D(\omega)/\norme{D(\omega)}_{\infty}$ and on the other hand, $\lim_{k\in L}\psi_k\leq 0$. 

Since $\dpl\norminf{d^k}$ does not approach $0$ even though $\lim_{k\in L}\dpl=0$, the following holds 
\begin{eqnarray}\label{D1}
\underset{k\in L}{\lim}\  \left(\frac{-\psi_k}{\dpl\norminf{d^k}} \right) = \underset{k\in L}{\lim}\ \frac{\fd(x^k+\dm d^k)-\fd(x^k)}{\dm\norminf{d^k}} \geq 0.
\end{eqnarray}
Then, applying Lemmas~\ref{limsuplemma} using sequences $x^k\to \hat{x}$, $d^k/{\norminf{d^k}}\to d/\norminf{d}$ and $\dm\norminf{d^k}\searrow~0$, the following holds for the generalized derivative of $f$:
\begin{eqnarray}
f^{\circ}\left(\hat{X}(\omega);\frac{D(\omega)}{\norme{D(\omega)}_{\infty}}\right)=f^{\circ}\left(\hat{x};\frac{d}{\norminf{d}}\right)&{\blc =}& \underset{x\to\hat{x}, v\to {d/\norminf{d}}, t\searrow 0}{\limsup} \frac{\fd(x+tv)-\fd(x)}{t}\nonumber \\
&\geq& \underset{k\in L}{\limsup}\ \frac{\fd\left(x^k+\dm {\norminf{d^k}} \frac{d^k}{\norminf{d^k}}\right)-\fd(x^k)}{\dm\norminf{d^k}}\nonumber\\
&\geq& \underset{k\in L}{\lim}\ \frac{\fd\left(x^k+\dm {\norminf{d^k}} \frac{d^k}{\norminf{d^k}}\right)-\fd(x^k)}{\dm\norminf{d^k}} \geq 0,\quad \quad  \label{D2}
\end{eqnarray}
where the last inequality in~\eqref{D2} follows from~\eqref{D1}. 

\end{proof}

\section{Computational study}\label{sec4}

The performance of StoMADS is analyzed in this section on a collection of stochastic noisy functions artificially created from deterministically {\blc unconstrained} analytical problems from the optimization literature. Several variants of StoMADS have been compared to Robust-MADS~\cite{AudIhaLedTrib2016} which is {\blc the current} noisy blackbox optimization algorithm available in the {\blc \sf  NOMAD}~\cite{Le09b} software package (version~3.9.1) {\blc  and which is referred to in this section as {\sf NOMAD-robust}. All tests with both StoMADS and {\sf NOMAD-robust} use only a POLL step, i.e, the SEARCH step and hence the quadratic models~\cite{CoLed2011} in {\sf NOMAD} are disabled, with the OrthoMADS $2n$ directions~\cite{AbAuDeLe09} ordered by means of an opportunistic strategy~\cite{AuHa2017} and disabling the anisotropic mesh~\blc \cite{AuLedTr2014}. The MADS algorithm~\cite{AuDe2006} with the SEARCH step disabled is referred to as {\sf NOMAD-basic}. The default algorithm in {\sf NOMAD} is referred to as {\sf NOMAD-default}. Note that detailed descriptions of all these algorithms are provided in Table~\ref{tabAlgo}. Moreover, in order to highlight the ability of StoMADS vis-‡-vis of {\sf NOMAD-basic} and {\sf NOMAD-default}, to cope with stochastically noisy optimization problems,
both latter algorithms are also compared to StoMADS.}

\begin{table}[ht!]
\footnotesize
\caption{Description of the algorithms.}
\label{tabAlgo} 
\centering
\begin{tabular}{llllccclllll}
	\hline
	\multirow{4}{*}{Algorithm} & \multicolumn{11}{c}{Description}                                                                                                                                                                                                                      \\ \cline{2-12} 
	& \multicolumn{3}{l}{Direction type}  & \begin{tabular}[c]{@{}c@{}}Anisotropic\\ mesh \end{tabular} & \begin{tabular}[c]{@{}c@{}}$\ \ $Opportunistic$\ \ $\\ strategy\end{tabular} & \multicolumn{6}{c}{\begin{tabular}[c]{@{}c@{}}Quadratic\\ models\end{tabular}} \\ \hline
	StoMADS                    & \multicolumn{3}{l}{OrthoMADS $2n$}  & No                                                          & Yes                                                              & \multicolumn{6}{c}{No}                                                         \\ \hline
	{\sf NOMAD-robust $\quad \quad$}               & \multicolumn{3}{l}{OrthoMADS $2n$}  & No                                                          & Yes                                                              & \multicolumn{6}{c}{No}                                                         \\ \hline
	{\sf NOMAD-basic}                & \multicolumn{3}{l}{OrthoMADS $2n$}  & No                                                          & Yes                                                              & \multicolumn{6}{c}{No}                                                         \\ \hline
	{\sf NOMAD-default}              & \multicolumn{3}{l}{OrthoMADS $n+1$~\cite{AuIaLeDTr2014}} & Yes                                                         & Yes                                                              & \multicolumn{6}{c}{Yes}                                                        \\ \hline
\end{tabular}
\end{table}


The analytical unconstrained problems are adapted from the $22$ different {\sf CUTEst}~\cite{cutest} functions used in~\cite{MoWi2009} with different starting points {\blr for} a total of $66$ unconstrained {\blc instances} whose dimensions range from $2$ to $12$. 
Their objectives are in the form of a sum of squares function, i.e, 
\begin{equation}\label{square}
f(x)=\sum_{i=1}^{m}(f_i(x))^2, \nonumber
\end{equation}
$f_i(x)$ being a smooth function for each $i\in\{1,2,\dots,m\}$. 

The type of noise that is tested is referred to as ``{\it additive}'' noise, i.e, each $f_i$ is additively perturbed by some random variable $\Theta_i$ generated uniformly in the interval $I(\sigma,x^0,f^*)$ defined by\\ ${\blu I(\sigma,x^0,f^*)=} \left[-\sigma\abs{f(x^0)-f^*},\sigma\abs{f(x^0)-f^*}\right]$, {\blu i.e.}, 
\begin{equation}\label{squaresto}
f_\Theta(x)=\sum_{i=1}^{m}(f_i(x)+\Theta_i)^2, 
\end{equation}
where $\sigma>0$ is a constant that is used to define different noise levels in the blackbox $f_\Theta$, $\Theta_i$, $i\in\{1,2,\dots,m\}$, are independent random variables, $x^0$ is a starting point and $f^*$ is the best known minimum value of $f$. Although it obviously follows from~\eqref{squaresto} that $\Esp_\Theta[f_\Theta(x)]=f(x)+\sum_{i=1}^{m}\Esp[(\Theta_i)^2]$, optimization results are not affected by this constant bias term since $\min_x \Esp_\Theta[f_\Theta(x)] = \min_x f(x)$. 

The {\sf NOMAD-robust} algorithm to which StoMADS is compared is a smoothing-based algorithm designed to {\blu handle} noisy blackbox optimization problems.  At each iteration of {\sf NOMAD-robust}, a best mesh local optimizer is determined based on values of the smoothed version of the noisy available objective constructed from a list of trial points and making use of a Gaussian kernel~\cite{AudIhaLedTrib2016}. This list is then updated with the best iterate found before the next iteration of the algorithm. Although experiments in~\cite{AudIhaLedTrib2016} have been conducted on deterministically noisy problems, the smoothing-based technique does not depend on the link between the objective function $f$ and its noisy available version, which means that {\sf NOMAD-robust} is supposed to cope with stochastically noisy problems. 

{\blc In order to assess if the algorithms have successfully generated solution values close to the best function $f$ values, data profiles~\cite{MoWi2009} and performance profiles~\cite{DoMo02,MoWi2009} are presented using the following convergence test: 
\begin{equation}\label{fNacc}
f(x^N)\leq f(x^*)+\tau(f(x^0)-f(x^*)), 
\end{equation}
where, for each of the $66$ problems, $x^N$ denotes the best point found by an algorithm after $N$ function calls to the noisy objective $f_\Theta$, $x^*$ is the best known solution and $\tau\in [0,1]$ is the convergence tolerance. Thus, a problem is said to be solved within the convergence tolerance~$\tau$ if~\eqref{fNacc} holds.}

The horizontal axis of the data profiles shows the number of noisy function evaluations divided by $n+1$ while the vertical axis shows the portion of problems solved within a given convergence tolerance~$\tau$. {\blc The horizontal axis of the performance profiles shows the ratio of the number of function calls to the noisy blackbox while the vertical axis shows the portion of problems solved within the tolerance~$\tau$.} In all the experiments, a budget of $1000(n+1)$ noisy function evaluations is set, i.e all algorithms stop as soon as the number of function calls to $f_\Theta$ reaches $1000(n+1)$. For the initialization, the same common parameters to both methods are used: $\delta_m^0=\delta_p^0=1$ and {\blr the mesh refining parameter} $\tau=1/2$. StoMADS parameters $\gamma$ and $\ef$ are chosen {\blc arbitrarily} so that $\gamma\ef=0.17$. However, for the choice of the sample {\blr size} $p^k$, it is worthwhile to mention that {\sf NOMAD-robust} is not in line with the theory analyzed in this work, especially in term of {\blr sample} sizes which are not involved in its theory. Indeed, the blackbox is evaluated by {\sf NOMAD-robust} at each point only once, while it needs to be evaluated at least $p^k$ times by StoMADS at each point in order to construct the estimates $\fok=\frac{1}{p^k}\sum_{i=1}^{p^k}f_{\theta_{1,i}}(x^k)\approx f(x^k)$ and $\fsk=\frac{1}{p^k}\sum_{i=1}^{p^k}f_{\theta_{2,i}}(x^k+s^k)\approx f(x^k+s^k)$, where $\theta_{1,i}$ and $\theta_{2,i}$,  $i\in\{1,2,\dots,p^k\}$, are the realizations, respectively, of the random variables $\Theta_{1,i}$ and $\Theta_{2,i}$ introduced in Section~\ref{computation}. 

This latter remark, in addition to the need for $p^k$ to be large in order for the estimates to be sufficiently accurate, therefore yields the following challenge that has to be faced: obtaining satisfactory solutions with the allocated budget, but requiring only {\blr few evaluations} of the stochastic blackbox during the estimates computation. {\blr Recall that $n^k$ denotes the number of blackbox evaluations at a given point when constructing an estimate at the iteration $k$.} Five variants of StoMADS corresponding respectively to $n^k=1,2,\dots,5$ for all $k$, are {\blr therefore} compared to {\sf NOMAD-robust}, {\sf NOMAD-basic} and {\sf NOMAD-default}, and despite the fact that the resulting values of $p^k$ do not meet the theoretical prescription derived in Section~\ref{computation}, they seemed to work well enough compared to many various other choices of $n^k$ that have been tested. {\blr However}, in order to increase the estimates accuracy while using few blackbox evaluations, {\blr the following procedure described in Section~\ref{computation} is used. Recall that it improves the estimates accuracy by making use of available samples at the current iterate during estimates computation, thus avoiding additional blackbox evaluations.
When the iteration $k$ is successful, the estimate $f_0^{k+1}$ of $f(x^{k+1})$ is computed according to~\eqref{comp1},
while after an unsuccessful iteration $k$, $f_0^{k+1}$ is given by~\eqref{comp2}.}

The three levels of noise that are considered in the experiments correspond respectively to $\sigma=1\%$, $\sigma=3\%$ and $\sigma=5\%$. {\blu These values are arbitrarily chosen in order to study how the portion of problems solved by StoMADS varies with the noise level}. Considering for example the Rosenbrock~\cite{cutest} test function given by 
\begin{equation}\label{rosenbrock}
f(x)= 100(x_2-x_1^2)^2+(1-x_1)^2,
\end{equation}
with the starting point ${\blr x^0}=(-1.2,1)$ and the minimum value $f^*=0$, then $\abs{f({\blr x^0})-f^*}=24.2$ and the corresponding noisy function is given by 
\begin{equation}\label{noisyrosenbrock}
f_\Theta(x)= \left[10(x_2-x_1^2)+\Theta_1\right]^2+\left[(1-x_1)+\Theta_2\right]^2,
\end{equation}
where $\Theta_1$ and $\Theta_2$ are independent random variables uniformly generated in the interval ${\blu I(\sigma,{\blr x^0},f^*)=}$ $\left[-24.2\sigma,24.2\sigma\right]$. Figure~\ref{rosenfig} shows the plots of the Rosenbrock function and its corresponding noisy versions. {\blc Figure~\ref{dataprof1pc},~\ref{dataprof3pc},~\ref{dataprof5pc} and Figure~\ref{perfprof1pc},~\ref{perfprof3pc},~\ref{perfprof5pc} present the data profiles and the performance profiles which compare the five variants of StoMADS with {\sf NOMAD-robust}, {\sf NOMAD-basic} and {\sf NOMAD-default} for various noise levels and convergence tolerances.

The data profiles and the performance profiles show in general that StoMADS outperforms not only {\sf NOMAD-robust}, but also both deterministic blackbox optimization algorithms {\sf NOMAD-basic} and {\sf NOMAD-default} which are obviously not appropriate for stochastic optimization. Moreover, changing the value of the tolerance parameter~$\tau$ in the performance profiles does not significantly alter the conclusions drawn from the data profiles. {\blr Thus, it can be noticed that for a given~$\tau$, the higher the noise level, the lower is the portion of problems solved for most variants of StoMADS as expected. Indeed, since the variance of the noise in the noisy blackbox augments with the noise level, it follows from Section~\ref{computation} that the estimates need to be sufficiently accurate to generate satisfactory solutions and consequently allow the resolution of a larger portion of problems. Similarly, for a fixed noise level, the higher the convergence tolerance, the larger is the portion of problems solved by most algorithms.

Furthermore, even though the number $n^k$ of blackbox evaluations is constant from one iteration to another for a given variant of StoMADS, this is not the case for the sample size $p^k$ involved in the estimates computation. Indeed, it follows respectively from~\eqref{comp1} and~\eqref{comp2} that $p^{k+1}=2n^k$ when the iteration $k$ is successful while $p^{k+1}=p^k+n^{k+1}$ when it is unsuccessful. Thus, even though the efficiency of each StoMADS variant depends on its corresponding evaluation parameter $n^k$, the quality of the solutions that are generated is influenced by the sample rate $p^k$ which is not constant. This explains why varying the blackbox evaluation parameter $n^k$ from one to five does not necessarily improve the performance of the corresponding StoMADS variants. Note that this also explains why the behavior of the StoMADS variant corresponding to $n^k=1$ is not similar to that of MADS. Indeed, no estimates computation is carried out in MADS and moreover, MADS is unable to show how an improvement in a noisy blackbox can lead to a decrease in an available objective function unlike StoMADS.} 

It follows from these results, specifically the analysis of the profiles corresponding to the tolerance $\tau=10^{-3}$, that StoMADS can handle the optimization of stochastically noisy blackboxes that are expensive in term of blackbox evaluations, {\blc since its variants corresponding to $n^k=1$ and $n^k=2$ are able to generate satisfactory solutions thus using few blackbox evaluations}. However, the choice $n^k=4$ seems to be preferable for stochastic blackbox optimization problems with higher evaluations budgets.} 


\begin{figure}[ht!]
\centering
\includegraphics[scale=0.456]{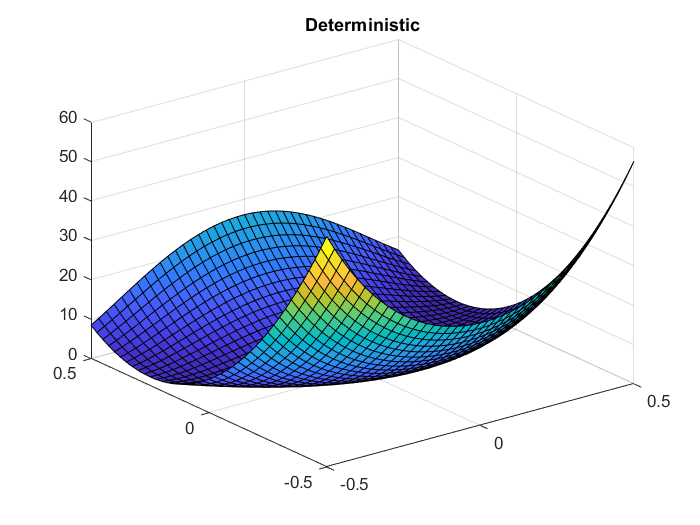}   
\includegraphics[scale=0.456]{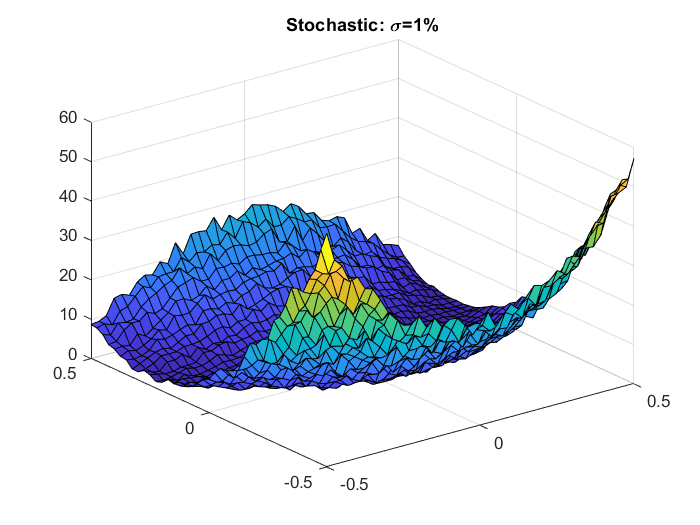} 
\includegraphics[scale=0.456]{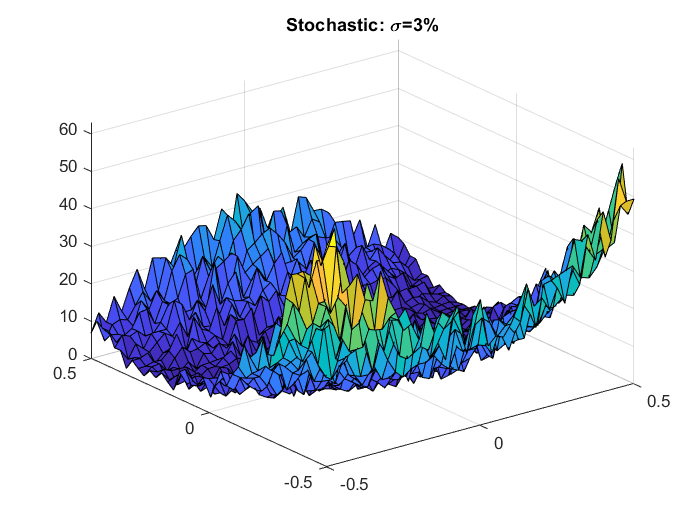}
\includegraphics[scale=0.456]{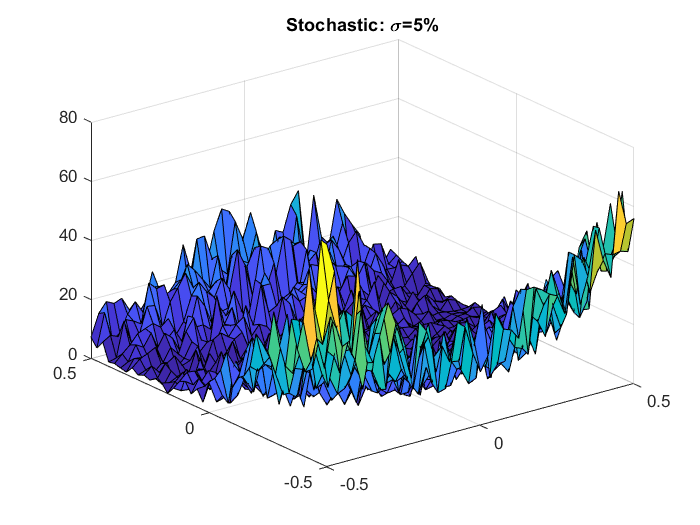}
\caption[]{\small{Plots of the deterministic Rosenbrock function~\eqref{rosenbrock} and its corresponding noisy versions~\eqref{noisyrosenbrock} on the box $[-0.5,0.5]\times [-0.5,0.5]$. The random variables defining the noisy functions $f_\Theta$ are uniformly generated in $[-24.2\sigma, 24.2\sigma]$.}}
\label{rosenfig}
\end{figure}
\clearpage
\twocolumn

\begin{figure}[]
\centering
\includegraphics[scale=0.49]{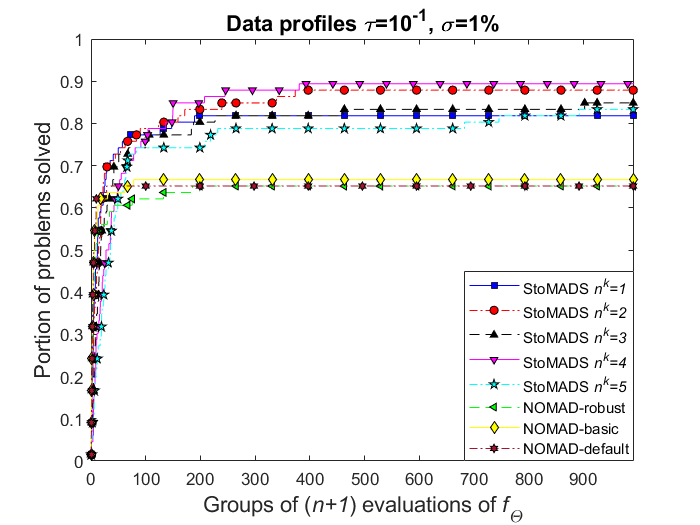}       
\includegraphics[scale=0.49]{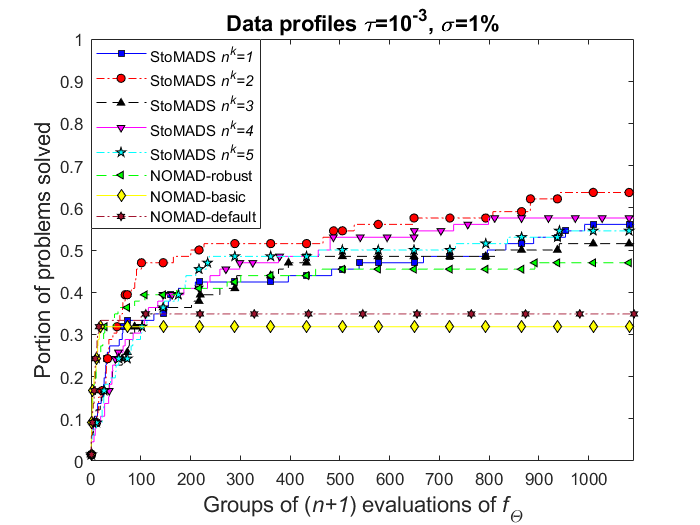}
\caption[]{\small{Data profiles  for noise level $\sigma=1\%$ and convergence tolerances $\tau=10^{-1}$ and $\tau=10^{-3}$ on 66 analytical unconstrained test problems additively perturbed in the interval $I(\sigma,{\blr x^0},f^*)$.}}
\label{dataprof1pc}
\end{figure}
\begin{figure}[]
\centering
\includegraphics[scale=0.49]{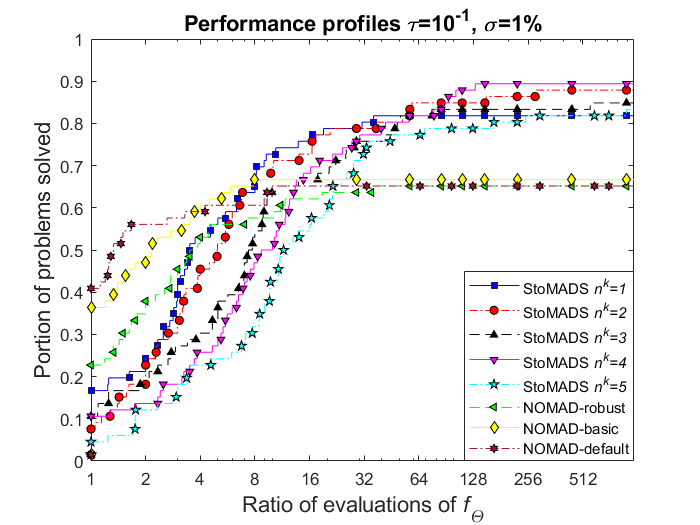} 
\includegraphics[scale=0.49]{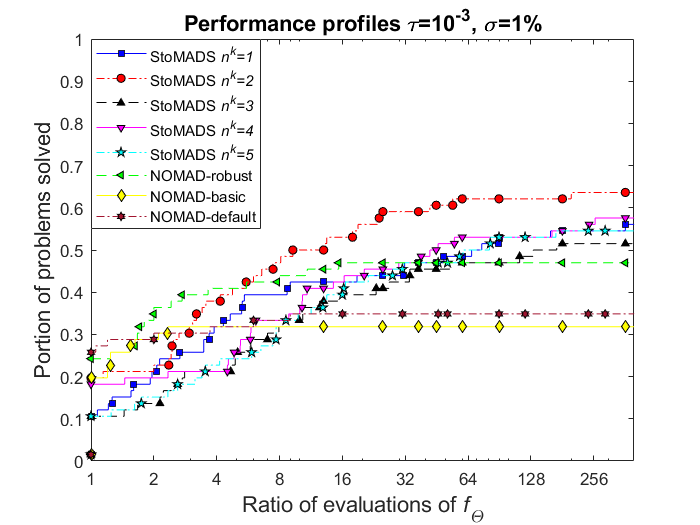}
\caption[]{\small{Performance profiles for noise level $\sigma=1\%$ and convergence tolerances $\tau=10^{-1}$ and $\tau=10^{-3}$ on 66 analytical unconstrained test problems additively perturbed in the interval $I(\sigma,{\blr x^0},f^*)$.}}
\label{perfprof1pc}
\end{figure}

\clearpage

\begin{figure}[]
\centering
\includegraphics[scale=0.49]{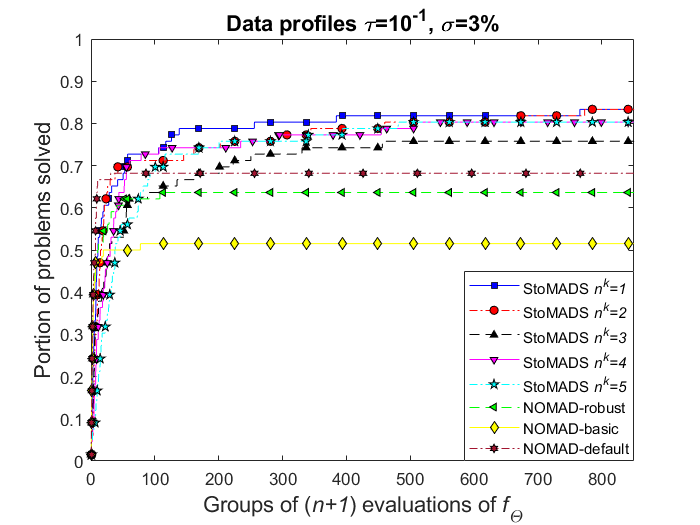}
\includegraphics[scale=0.49]{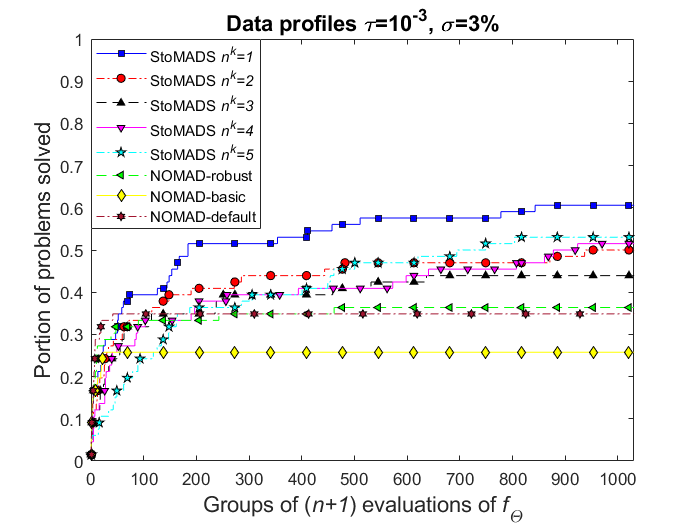}
\caption[]{\small{Data profiles  for noise level $\sigma=3\%$ and convergence tolerances $\tau=10^{-1}$ and $\tau=10^{-3}$ on 66 analytical unconstrained test problems additively perturbed in the interval $I(\sigma,{\blr x^0},f^*)$.}}
\label{dataprof3pc}
\end{figure}

\begin{figure}[]
\centering
\includegraphics[scale=0.49]{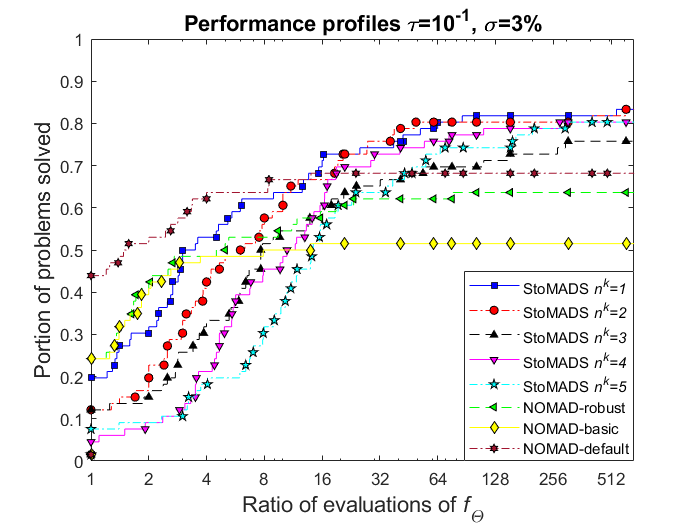}
\includegraphics[scale=0.49]{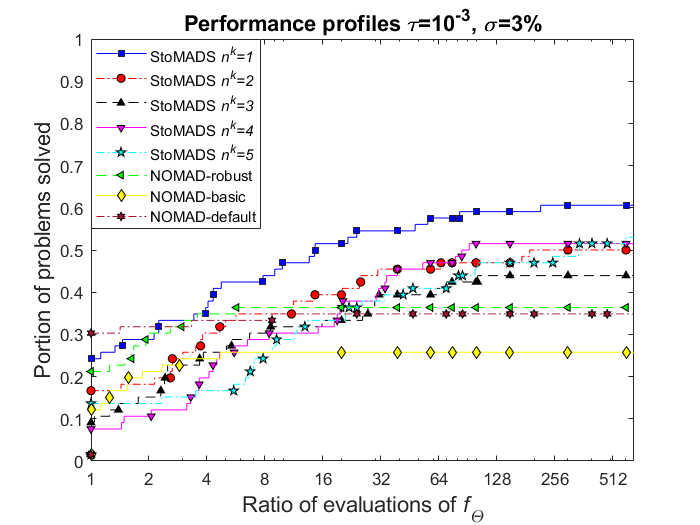}
\caption[]{\small{Performance profiles for noise level $\sigma=3\%$ and convergence tolerances $\tau=10^{-1}$ and $\tau=10^{-3}$ on 66 analytical unconstrained test problems additively perturbed in the interval $I(\sigma,{\blr x^0},f^*)$.}}
\label{perfprof3pc} 
\end{figure}

\clearpage

\begin{figure}[]
\centering
\includegraphics[scale=0.49]{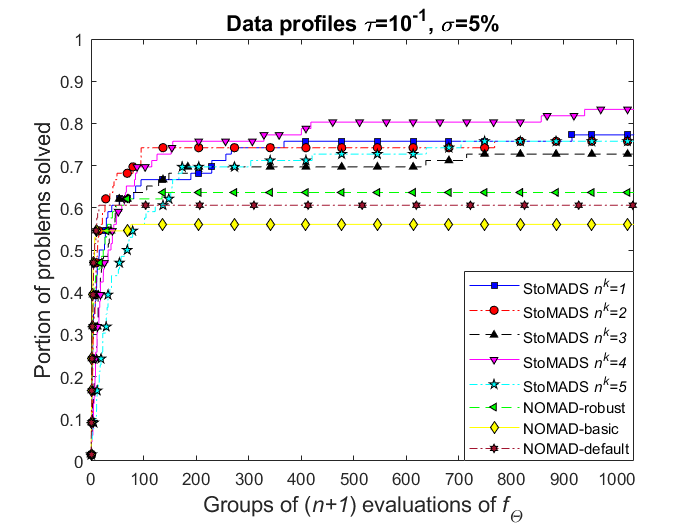}
\includegraphics[scale=0.49]{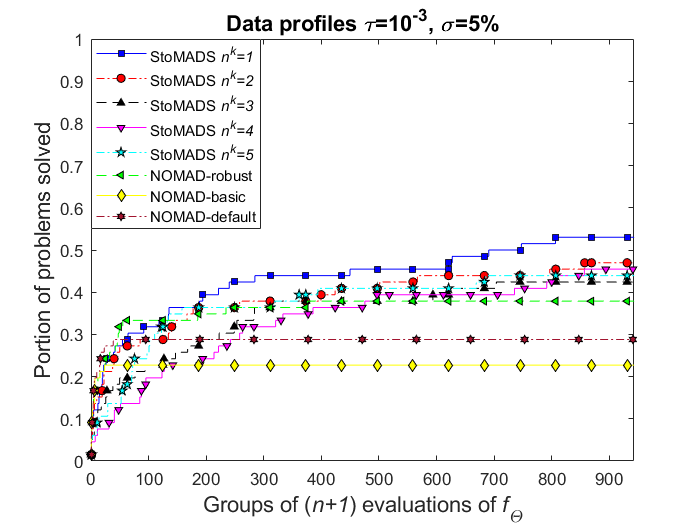}
\caption[]{\small{Data profiles  for noise level $\sigma=5\%$ and convergence tolerances $\tau=10^{-1}$ and $\tau=10^{-3}$ on 66 analytical unconstrained test problems additively perturbed in the interval $I(\sigma,{\blr x^0},f^*)$.}}
\label{dataprof5pc} 
\end{figure}

\begin{figure}[]
\centering
\includegraphics[scale=0.49]{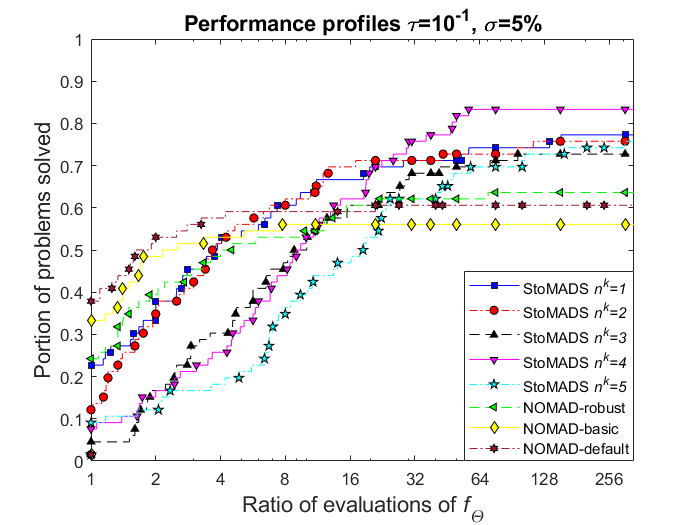}
\includegraphics[scale=0.49]{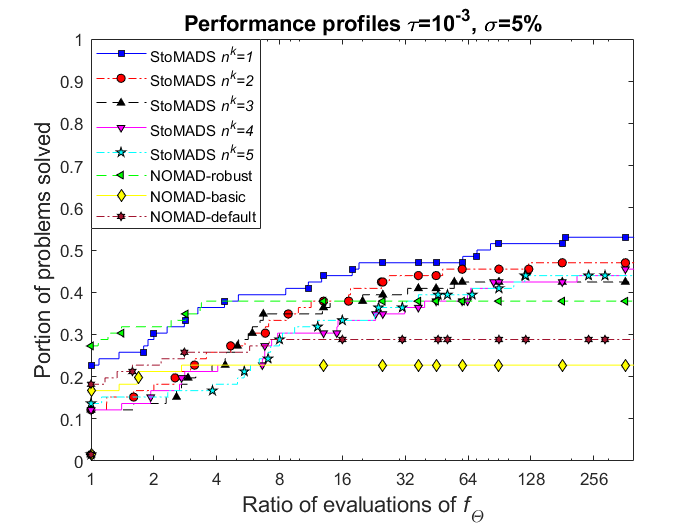}
\caption[]{\small{Performance profiles for noise level $\sigma=5\%$ and convergence tolerances $\tau=10^{-1}$ and $\tau=10^{-3}$ on 66 analytical unconstrained test problems additively perturbed in the interval $I(\sigma,{\blr x^0},f^*)$.}}
\label{perfprof5pc} 
\end{figure}

\clearpage
\onecolumn

\section*{\blr Discussion}
MADS is a valuable blackbox optimization algorithm with full-supported convergence analysis, but it is designed for deterministic problems. Even though Robust-MADS, the first variant of MADS designed for noisy blackbox optimization, was shown to have {\it {\blu zero}-order} convergence properties, the corresponding work~\cite{AudIhaLedTrib2016} did not show how an improvement in the smoothed version of the noisy available blackbox, used to update the iterates, should result in a decrease in the unknown objective.
	
{\blr Thus, unlike Robust-MADS, the method proposed in this manuscript, StoMADS, clearly shows how an improvement in the estimates of the unavailable objective function values may cause a decrease in the unavailable objective function. This is achieved by defining new iteration types by means of a sufficient decrease condition on these estimates that are required to be probabilistically sufficiently accurate.}

Although the convergence analysis of StoMADS uses ideas derived from that of MADS, the analysis itself is different and based on stochastic processes theory. In addition to the convergence result of the whole sequence of random mesh size parameters, which is stronger than the $\liminf$-type result of MADS, a more general existence proof of {\it refining} subsequences consisting of StoMADS iterates that are not necessarily mesh local optimizers has been proposed, followed by a stochastic variant of the Clarke optimality result of MADS.

An extensive computational study of several variants of StoMADS on a collection of unconstrained stochastically noisy problems shows that the proposed method outperforms Robust-MADS and also highlights the fact that MADS is not appropriate for stochastic blackbox optimization, even though StoMADS  estimates accuracy do not meet the prescription that has been derived theoretically.
	
Note that compared to all prior works using a theory similar to the one analyzed in this manuscript, the present research is to the best of our knowledge the first that requires no model or gradient information to find descent directions.
	
Future research will focus on extending this approach to stochastically noisy constrained and/or chance constraints blackbox optimization.

\section*{Acknowledgments}
{\blc The authors are grateful to Erick Delage from HEC MontrÈal and Richard Labib from Polytechnique MontrÈal for valuable discussions and constructive suggestions. This work is supported by the NSERC CRD RDCPJ 490744-15 grant and by an Innov\'E\'E grant, both in collaboration with Hydro-QuÈbec and Rio Tinto.}

\clearpage

\bibliographystyle{plain}
\bibliography{bibliography}

\end{document}